%%%%%%%%%%%%%%%%%%%%%%%%%%%%%%%%%%%%%%%%%%%%%%%%%%%%%%%%%%%%%%%%%%%%%%%%%%
%%%%%%%%%%%%%%%%%%%%%%%%%%%%%%%%%%%%%%%%%%%%%%%%%%%%%%%%%%%%%%%%%%%%%%%%%%%%
\documentclass[11pt,a4paper]{amsart}
\usepackage{amssymb}
\usepackage{mathabx}
\usepackage{mathrsfs}
\usepackage{syntonly}
\usepackage{amsmath}
\usepackage{amsthm}
\usepackage{amsfonts}
\usepackage{amssymb}
\usepackage{latexsym}
\usepackage{amscd,amssymb,amsopn,amsmath,amsthm,graphics,amsfonts,mathrsfs,accents,enumerate,verbatim,calc}
\usepackage[dvips]{graphicx}
\usepackage[colorlinks=true,linkcolor=red,citecolor=blue]{hyperref}
\usepackage[all]{xy}

\date{}
\pagestyle{plain}
%%%%%%%%%%%%%%%%%%%%%%%%%%%%%%%%%%%%%%%%%%%%%%%%%%%%%%%%%%%%%%%%%%%%
\textheight= 23.5 true cm \textwidth =16.5 true cm

\allowdisplaybreaks[4] \footskip=15pt
\renewcommand{\uppercasenonmath}[1]{}

\topmargin=6pt \evensidemargin0pt \oddsidemargin0pt
%\headheight7pt
%\headsep12pt
%\marginparwidth0pt
%\marginparsep0pt
%\footskip0pt
%\footnotesep0pt
%%%%%%%%%%%%%%%%%%%%%%%%%%%%%%%%%%%%%%%%%%%%%%%%%%%%%%%%%%%%%%%%%%%
\numberwithin{equation}{section} \theoremstyle{plain}
\newtheorem{lem}{Lemma}[section]
\newtheorem{cor}[lem]{Corollary}

\newtheorem{thm}[lem]{Theorem}

\newtheorem{definition}[lem]{Definition}
\newtheorem{Ex}[lem]{Example}
\newtheorem{Quest}[lem]{Question}
\newtheorem{Property}[lem]{Property}
\newtheorem{Properties}[lem]{Properties}
\newtheorem{Subprops}{}[lem]
\newtheorem{Para}[lem]{}

\newtheorem{remark}[lem]{Remark}
\newtheorem{rem}[lem]{Remark}

\newenvironment{rmk}{\begin{rem}\rm}{\end{rem}}
\newenvironment{df}{\begin{definition}\rm}{\end{definition}}
%\newenvironment{df}{\begin{Defn}\rm}{\end{Defn}}

%%%%%%%%%%%%%%%%%%%%%%%%%%%%%%%%%%%%%%
%%%%%%%%%%%%%%%%%%%%%%%%%%%%%%%%%%%%%%
\newtheorem*{ack*}{ACKNOWLEDGEMENTS}

%%%%%%%%%%%%%%%%%%%%%%%%%%%%%%%%%%%%%%%%%%%%%%%%%%%%%%%%%%%%%%%%%%%%

%%%%%%%%%%%%%%%%%%%%%%%%%%%%%%%%%%%%%%%%%%%%%%%%%%%%%%%%%%%%%%%%%%%
%%%%%%%%%%%%%%%%%%%%%%%%%%%%%%%%%%%%%%%%%%%%%%%%%%%%%%%%%%%%%%%%%%%%%%%%%

%%%%%%%%%%%%%%%%%%%%%%%%%%%%%%%%%%%%%%%%%%%%%%%%%%%%%%%%%%%%%%%%%%%
%%%%%%%%%%%%%%%%%%%%%%%%%%%%%%%%%%%%%%%%%%%%%%%
%%%%%%%%%%%%%%%%%%%%%%%%%%%%%%%%%%%%%%%%%%%%

\newcommand{\pf}{\noindent\begin {proof}}
\newcommand{\epf}{\end{proof}}

\newcommand{\X}{\mathcal{X}}
\newcommand{\W}{\mathcal{W}}
\newcommand{\U}{\mathcal{U}}
\newcommand{\V}{\mathcal{V}}

\newcommand{\C}{\mathcal{C}}

\newcommand{\E}{\mathbb{E}}

\newcommand{\ra}{\rightarrow}

\newcommand{\D}{\mathrm{Defl}}
\newcommand{\I}{\mathrm{Infl}}
\newcommand{\Zh}{\mathrm{Cone}}
\newcommand{\YZ}{\mathrm{CoCone}}
\newcommand{\We}{\W_{\widetilde{\mathcal{L}},\widetilde{\mathcal{R}}}}
%%%%%%%%%%%%%%%%%%%%%%%%%%%%%%%%%%%%%%%%%%%%%%%
\pagestyle{myheadings}
\markboth{\rightline {\scriptsize   Y. Ma, H. You, D. Zhang and P. Zhou}}
         {\leftline{\scriptsize Admissible weak factorization systems on extriangulated categories}}

\begin{document}
\begin{center}
{\large  \bf  Admissible weak factorization systems on extriangulated categories}

\vspace{0.5cm}  Yajun Ma, Hanyang You, Dongdong Zhang$\footnote{Corresponding author. Panyue Zhou is supported by the National Natural Science Foundation of China (Grant No. 12371034) and by the Hunan Provincial Natural Science Foundation of China (Grant No. 2023JJ30008).}$ and Panyue Zhou

\end{center}

\bigskip
\centerline { \bf  Abstract}
\medskip

\leftskip10truemm \rightskip10truemm \noindent\hspace{1em}  Extriangulated categories, introduced by Nakaoka and Palu, serve as a simultaneous generalization of exact and triangulated categories. In this paper, we first introduce the concept of admissible weak factorization systems and establish a bijection between cotorsion pairs and admissible weak factorization systems on extriangulated categories. Consequently, we give the equivalences between hereditary cotorsion pairs and compatible cotorsion pairs via admissible weak factorization systems under certain conditions in extriangulated categories, thereby generalizing a result by Di, Li, and Liang.\\[2mm]
{\bf Keywords:} extriangulated category; admissible weak factorization system;  cotorsion  pair\\
{\bf 2020 Mathematics Subject Classification:} { 18G80; 18E10; 18A32}

\leftskip0truemm \rightskip0truemm
%\bigskip
\section { \bf Introduction}

The notion of extriangulated categories, defined by $\E$-triangles satisfying specific axioms, was introduced by Nakaoka and Palu in \cite{NP} as a simultaneous generalization of exact and triangulated categories. More precisely, triangulated categories and extension-closed subcategories of triangulated categories are examples of extriangulated categories. Additionally, there exist other examples of extriangulated categories that are neither exact nor triangulated (see \cite{NP, ZZ, HZZ}).

A model structure on a category $\C$ is a triple of three classes of morphisms respectively called \emph{cofibrations}, \emph{fibrations}, and \emph{weak equivalences}, satisfying Retract axiom, Lifting axiom, Factorization axiom and Two out of three axiom. For more details, see \cite{HM,QH}. Given an additive category $\C$ with a model structure, Quillen's homotopy category (i.e., the localization of $\C$ with respect to the weak equivalences) is a pretriangulated category in the sense of \cite{BR}. However, it is not a triangulated category in general.

A triple $(\U, \mathcal{W}, \V)$ of classes of objects in an exact category $\C$ is  called a \emph{Hovey triple}  if both $(\U, \mathcal{W}\bigcap\V)$ and $(\U\bigcap \mathcal{W}, \V)$ are complete cotorsion pairs, and $\mathcal{W}$ is \emph{thick} (i.e., $\mathcal{W}$ is closed under direct summands and if two out of three terms in an admissible exact sequence are in $\mathcal{W}$, then so is the third one).

Nakaoka and Palu established a bijective correspondence between admissible model structures and Hovey triples \cite{NP}, unifying Hovey's work on abelian categories \cite{HCc}, Gillespie's work on weakly idempotent complete exact categories \cite{Gillespie}, and Yang's work on triangulated categories \cite{Yang}. These correspondences offer a method for constructing model structures from Hovey triples, which are simpler to work with. For example, Gillespie provided a convenient way to construct Hovey triples using compatible and complete hereditary cotorsion pairs (see \cite{G}).

%Hovey gave a bijective correspondence between the Hovey triples and abelian model structures on abelian categories \cite{HCc}; this result has been extended to weakly idempotent complete exact categories by Gillespie \cite{Gillespie}; and Yang has established a bijective correspondence between model structures and cotorsion pairs on triangulated categories \cite{Yang}. Subsequently, Nakaoka and Palu gave a bijective correspondence between admissible model structures and Hovey triples in \cite{NP} which unified these results. These correspondences provide a way to construct  model structures from Hovey triples, which are simpler. For instance, Gillespie provides a convenient way to construct Hovey triples by compatible and complete hereditary cotorsion pairs, see \cite{G}.

\vspace{2mm}
 {\bf Theorem (Gillespie)} \cite[Theorem 1.1]{G} Let $(\U,\widetilde{\V})$ and $(\widetilde{\U},\V)$ be compatible and complete hereditary cotorsion pairs in an abelian category $\mathcal{C}$. Then there is a subcategory $\mathcal{W}$ such that $(\U, \mathcal{W}, \V)$ forms a Hovey triple.

Zhou  generalized the above theorem to extriangulated categories, see \cite{Z}.
\vspace{2mm}

 {\bf Theorem (Zhou)} \cite[Theorem 3.10]{Z} Let $\mathcal{C}$ be an extriangulated category satisfying Condition (WIC), and $(\U,\widetilde{\V})$ and $(\widetilde{\U},\V)$ be two hereditary cotorsion pairs satisfying $\widetilde{\U}\subseteq \U$, $\widetilde{\V}\subseteq \V$ and $\U\bigcap\widetilde{\V}= \widetilde{\U}\bigcap\V$. Then there is a subcategory $\mathcal{W}$ such that $(\U, \mathcal{W}, \V)$ forms a Hovey triple.

\vspace{2mm}

Recently, Di, Li, and Liang generalized the aforementioned theorem to more general categories by introducing weak factorization systems as counterparts to complete cotorsion pairs (see \cite{DLLiang}). Specifically, for two compatible weak factorization systems $(\U,\widetilde{\V})$ and $(\widetilde{\U},\V)$ satisfying certain properties, they provided a method to construct model structures on general categories with pushouts along morphisms in $\U$ and pullbacks along morphisms in $\V$ (see \cite[Theorem C]{DLLiang}). They also showed that, in the case where $\mathcal{C}$ is an abelian category, a pair $(\U,\V)$ of classes of objects in $\mathcal{C}$ forms a complete and hereditary cotorsion pair if and only if the pair $(\mathrm{Mon(}\U{)},\mathrm{Epi(}\V{)})$ is a weak factorization system under additional assumptions (see \cite[Theorem A]{DLLiang}), where
$$\mathrm{Mon}(\U)=\{f\: \mid f ~\text{is a monomorphism with~} \mathrm{Coker}(f)\in\U\}$$
and
$$\mathrm{Epi}(\V)=\{f\:\mid f ~\text{is an epimorphism with~}\mathrm{Ker}(f)\in\V\}.$$
Moreover, two cotorsion pairs are compatible if and only if the corresponding weak factorization systems are compatible, see \cite[Theorem B]{DLLiang}.

The results above establish a connection between weak factorization systems and complete cotorsion pairs. The aim of this paper is to generalize these results from abelian categories to weakly idempotent complete extriangulated categories.

The paper is organized as follows. In Section 2, we recall the definition of an extriangulated category and outline some basic properties that will be used later. In Section 3, we introduce the concept of admissible weak factorization systems and establish a bijection between cotorsion pairs and admissible weak factorization systems (see Theorem \ref{Thm1}). As a consequence, we provide equivalences of hereditary cotorsion pairs via admissible weak factorization systems under certain conditions (see Theorem \ref{Thm2}). Finally, we give the equivalences of compatible cotorsion pairs through compatible admissible weak factorization systems (see Theorem \ref{Thm3}).

\section{\bf Preliminaries}
Throughout this paper, $\C$ denotes an additive category. By the term $``subcategory"$ we always mean a full additive subcategory of an additive category closed under isomorphisms and direct summands.
 We denote by ${\mathcal{\C}}(A, B)$ the set of morphisms from $A$ to $B$ in $\C$.

Let us briefly recall some definitions and basic properties of extriangulated categories from \cite{NP}. We omit some details here, but the reader can find them in \cite{NP}.

Assume that $\mathbb{E}: \mathcal{C}^{\rm op}\times \mathcal{C}\rightarrow {\rm Ab}$ is an additive bifunctor, where $\mathcal{C}$ is an additive category and ${\rm Ab}$ is the category of abelian groups. For any objects $A, C\in\mathcal{C}$, an element $\delta\in \mathbb{E}(C,A)$ is called an $\mathbb{E}$-extension.
Then formally, an $\mathbb{E}$-extension is a triple $(A,\delta,C)$.
For any $A$ and $C$, the zero element $0\in \E(C,A)$ is called the split $\E$-extension.

Let $\mathfrak{s}$ be a correspondence which associates an equivalence class $\mathfrak{s}(\delta)=\xymatrix@C=0.8cm{[A\ar[r]^x
 &B\ar[r]^y&C]}$ to any $\mathbb{E}$-extension $\delta\in\mathbb{E}(C, A)$. This $\mathfrak{s}$ is called a {\it realization} of $\mathbb{E}$, if it makes the diagram in \cite[Definition 2.9]{NP} commutative.
 A triplet $(\mathcal{C}, \mathbb{E}, \mathfrak{s})$ is called an {\it extriangulated category} if it satisfies the following conditions.
\begin{enumerate}
\item $\mathbb{E}\colon\mathcal{C}^{\rm op}\times \mathcal{C}\rightarrow \rm{Ab}$ is an additive bifunctor.

\item $\mathfrak{s}$ is an additive realization of $\mathbb{E}$.

\item $\mathbb{E}$ and $\mathfrak{s}$ satisfy certain axioms in \cite[Definition 2.12]{NP}.
\end{enumerate}

In particular, we recall the following axioms which will be used later:

{\rm\bf (ET4)}~ Let $\delta\in\mathbb{E}(D,A)$ and $\delta'\in\mathbb{E}(F, B)$ be $\mathbb{E}$-extensions realized by
 \begin{center} $\xymatrix{A\ar[r]^f&B\ar[r]^{f'}&D}$ and $\xymatrix{B\ar[r]^g&C\ar[r]^{g'}&F}$\end{center}
 respectively. Then there { exist} an object $E\in\mathcal{C}$, a commutative diagram
 $$\xymatrix{A\ar[r]^f\ar@{=}[d]&B\ar[r]^{f'}\ar[d]_g&D\ar[d]^d\\
A\ar[r]^h&C\ar[r]^{h'}\ar[d]_{g'}&E\ar[d]^e\\
&F\ar@{=}[r]&F}$$
in $\mathcal{C}$, and an $\mathbb{E}$-extension $\delta^{''}\in\mathbb{E}(E, A)$ realized by $\xymatrix{A\ar[r]^h&C\ar[r]^{h'}&E,}$
which satisfy the following compatibilities.

\begin{enumerate}
\item[(i)] $\xymatrix{D\ar[r]^d&E\ar[r]^{e}&F}$ realizes $\mathbb{E}(F,f')(\delta')$,

\item[(ii)] $\mathbb{E}(d,A)(\delta^{''})=\delta$,

\item[(iii)] $\mathbb{E}(E,f)(\delta^{''})=\mathbb{E}(e,B)(\delta')$.
\end{enumerate}

{\rm\bf (ET4)$^{\rm op}$}\hspace{2mm} Dual of (ET4).

\begin{rem}
Note that both exact categories and triangulated categories are extriangulated categories $($see \cite[Example 2.13]{NP}$)$ and extension closed subcategories of extriangulated categories are
again extriangulated $($see \cite[Remark 2.18]{NP}$)$. Moreover, there exist extriangulated categories which
are neither exact categories nor triangulated categories $($see \cite[Proposition 3.30]{NP}, \cite[Example 4.14]{ZZ} and \cite[Remark 3.3]{HZZ}$)$.
\end{rem}

%Assume that $(\mathcal{C}, \mathbb{E}, \mathfrak{s})$ is an extriangulated category. By Yoneda's Lemma, any $\mathbb{E}$-extension $\delta\in \mathbb{E}(C, A)$ induces  natural transformations
% $\delta_\sharp: \mathcal{C}(-, C)\Rightarrow \mathbb{E}(-, A)$ $~~ ~$and$~~~$ $\delta^\sharp: \mathcal{C}(A, -)\Rightarrow \mathbb{E}(C, -)$.
%For any $X\in\mathcal{C}$, these $(\delta_\sharp)_X$ and $\delta^\sharp_X$ are given as follows:
%
%(1) $(\delta_\sharp)_X: \mathcal{C}(X, C)\Rightarrow \mathbb{E}(X, A); ~f\mapsto f^*\delta.$
%
%(2) $\delta^\sharp_X: \mathcal{C}(A, X)\Rightarrow \mathbb{E}(C, X); ~g\mapsto g_*\delta.$
\begin{lem}{\rm \cite[Corollary 3.12]{NP}}  Let $(\mathcal{C}, \mathbb{E}, \mathfrak{s})$ be an extriangulated category and $$\xymatrix@C=2em{A\ar[r]^{x}&B\ar[r]^{y}&C\ar@{-->}[r]^{\delta}&}$$ an $\mathbb{E}$-triangle. Then we have the following long exact sequences:

$\xymatrix@C=1cm{\mathcal{C}(C, -)\ar[r]^{\mathcal{C}(y, -)}&\mathcal{C}(B, -)\ar[r]^{\mathcal{C}(x, -)}&\mathcal{C}(A, -)\ar[r]^{\delta^\sharp}&\mathbb{E}(C, -)\ar[r]^{\mathbb{E}(y, -)}&\mathbb{E}(B, -)\ar[r]^{\mathbb{E}(x, -)}&\mathbb{E}(A, -);}$

$\xymatrix@C=1cm{\mathcal{C}(-, A)\ar[r]^{\mathcal{C}(-, x)}&\mathcal{C}(-, B)\ar[r]^{\mathcal{C}(-, y)}&\mathcal{C}(-, C)\ar[r]^{\delta_\sharp}&\mathbb{E}(-, A)\ar[r]^{\mathbb{E}(-, x)}&\mathbb{E}(-, B)\ar[r]^{\mathbb{E}(-, y)}&\mathbb{E}(-, C),}$

\noindent where natural transformations $\delta_\sharp$ and $\delta^\sharp$ are induced by $\mathbb{E}$-extension $\delta\in \mathbb{E}(C, A)$ via Yoneda's lemma.
\end{lem}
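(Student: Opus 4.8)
The plan is to prove the first (covariant) long exact sequence directly and then obtain the second one by duality: an $\mathbb{E}$-triangle $\xymatrix@C=1.2em{A\ar[r]^{x}&B\ar[r]^{y}&C\ar@{-->}[r]^{\delta}&}$ in $\mathcal{C}$ is the same datum as an $\mathbb{E}^{\rm op}$-triangle $\xymatrix@C=1.2em{C\ar[r]^{y}&B\ar[r]^{x}&A\ar@{-->}[r]^{\delta}&}$ in the opposite extriangulated category $(\mathcal{C}^{\rm op},\mathbb{E}^{\rm op},\mathfrak{s}^{\rm op})$, under which $\mathcal{C}(-,-)$ swaps its variables, $\mathbb{E}^{\rm op}(-,-)=\mathbb{E}(-,-)^{\rm op}$, and the connecting transformation $\delta^{\sharp}$ for $\mathcal{C}^{\rm op}$ becomes $\delta_{\sharp}$ for $\mathcal{C}$. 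So it suffices to fix $W\in\mathcal{C}$ and check exactness of
$$\mathcal{C}(C,W)\xrightarrow{\mathcal{C}(y,W)}\mathcal{C}(B,W)\xrightarrow{\mathcal{C}(x,W)}\mathcal{C}(A,W)\xrightarrow{\ \delta^{\sharp}\ }\mathbb{E}(C,W)\xrightarrow{\mathbb{E}(y,W)}\mathbb{E}(B,W)\xrightarrow{\mathbb{E}(x,W)}\mathbb{E}(A,W)$$
at each of its four interior spots. Throughout I would invoke the elementary consequences of the axioms already available in \cite{NP}: in any $\mathbb{E}$-triangle one has $yx=0$, $y$ is a weak cokernel of $x$ and $x$ a weak kernel of $y$; $\delta^{\sharp}(g)=\mathbb{E}(C,g)(\delta)$ and $\delta_{\sharp}(f)=\mathbb{E}(f,A)(\delta)$ by Yoneda; and the vanishings $\mathbb{E}(C,x)(\delta)=0=\mathbb{E}(y,A)(\delta)$ together with the connecting-level exactness at $\mathcal{C}(A,-)$ (and its dual at $\mathcal{C}(-,C)$), all of which follow from the square defining a realization in \cite[Definition 2.9]{NP} without recourse to (ET4).

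The exactness at the two $\mathcal{C}$-spots and the "composite is zero" halves at the two $\mathbb{E}$-spots are pure bookkeeping. Exactness at $\mathcal{C}(B,W)$ is the weak-cokernel property of $y$ together with $\mathcal{C}(x,W)\mathcal{C}(y,W)=\mathcal{C}(yx,W)=0$. At $\mathcal{C}(A,W)$ one inclusion is $\delta^{\sharp}(bx)=\mathbb{E}(C,b)\bigl(\mathbb{E}(C,x)(\delta)\bigr)=0$, while conversely $\delta^{\sharp}(g)=0$ means the push-out of $\delta$ along $g$ is split, hence realizable as $\xymatrix@C=1.4em{W\ar[r]^-{\binom{1}{0}}&W\oplus C\ar[r]&C}$, and the realization square of \cite[Definition 2.9]{NP} then supplies a morphism of $\mathbb{E}$-triangles over $(g,1_{C})$ whose middle component, post-composed with $W\oplus C\to W$, is a map $b\colon B\to W$ with $bx=g$. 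Finally $\mathbb{E}(y,W)\delta^{\sharp}(g)=\mathbb{E}(B,g)\bigl(\mathbb{E}(y,A)(\delta)\bigr)=0$ and $\mathbb{E}(x,W)\mathbb{E}(y,W)=\mathbb{E}(yx,W)=0$.

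The substance is in the two reverse inclusions at the $\mathbb{E}$-spots. For exactness at $\mathbb{E}(C,W)$, take $\varepsilon\in\mathbb{E}(C,W)$ with $\mathbb{E}(y,W)(\varepsilon)=0$ and realize it by $\xymatrix@C=1.4em{W\ar[r]^-{p}&X\ar[r]^-{q}&C\ar@{-->}[r]^-{\varepsilon}&}$. Since $\mathbb{E}(y,W)(\varepsilon)=0$, the connecting-level exactness (its dual, applied to this triangle) gives a morphism $b\colon B\to X$ with $qb=y$; then $q(bx)=yx=0$, so the weak-kernel property of $p$ yields $g\colon A\to W$ with $pg=bx$. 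It remains to prove $\delta^{\sharp}(g)=\varepsilon$, and this is where (ET4)/(ET4)$^{\rm op}$ enters: applying the octahedron axiom to the configuration just built produces a third $\mathbb{E}$-triangle whose defining extension must, by the compatibility clauses (i)--(iii) of (ET4) together with additivity of $\mathfrak{s}$, coincide with both $\mathbb{E}(C,g)(\delta)$ and $\varepsilon$. Exactness at $\mathbb{E}(B,W)$ follows the same template: from $\varepsilon'\in\mathbb{E}(B,W)$ with $\mathbb{E}(x,W)(\varepsilon')=0$, realize $\varepsilon'$, use that its pull-back along $x$ splits, and apply (ET4)$^{\rm op}$ to manufacture $\varepsilon\in\mathbb{E}(C,W)$ with $\mathbb{E}(y,W)(\varepsilon)=\varepsilon'$.

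I expect the main obstacle to be precisely the identification $\delta^{\sharp}(g)=\varepsilon$ (and its analogue at $\mathbb{E}(B,W)$). Unlike the remaining steps, which are Yoneda manipulations plus the weak kernel/cokernel properties, this one is a genuine diagram chase through (ET4) and the defining compatibility conditions of a realization; keeping straight the bookkeeping of push-outs, pull-backs and the additivity conventions is the delicate part, and it is the only place where the higher axiom is really used.
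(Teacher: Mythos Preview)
The paper does not give its own proof of this lemma; it is stated with the citation \cite[Corollary 3.12]{NP} and nothing further. So there is no in-paper argument to compare against, and your outline is essentially a reconstruction of the Nakaoka--Palu proof.

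One comment on the outline itself: at the spot $\mathbb{E}(C,W)$ you propose to build $g$ via the weak-kernel property of $p$ and then appeal to (ET4) to force $g_{*}\delta=\varepsilon$. This detour is unnecessary and is where you (correctly) anticipate trouble. Once you have $b\colon B\to X$ with $qb=y$, axiom (ET3)$^{\rm op}$ applied to the pair $(b,1_{C})$ hands you directly a morphism $g\colon A\to W$ for which $(g,b,1_{C})$ is a morphism of $\mathbb{E}$-triangles; by definition this means $g_{*}\delta=1_{C}^{*}\varepsilon=\varepsilon$, and you are done with no octahedron and no need to check that the particular $g$ produced by the weak-kernel factorization has the right pushforward. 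This is exactly \cite[Proposition 3.3]{NP}. The higher axiom (ET4)$^{\rm op}$ is genuinely required only at the final spot $\mathbb{E}(B,W)$, where one must manufacture an extension over $C$ from one over $B$; that is the content of \cite[Proposition 3.11]{NP}. So your diagnosis that the octahedron is the crux is right, but it is needed at one $\mathbb{E}$-spot rather than two, and the ``delicate bookkeeping'' you worry about at $\mathbb{E}(C,W)$ evaporates once you use (ET3)$^{\rm op}$ instead.
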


Let $\mathcal{C},\mathbb{E}$ be as above, we use the following  notations:

$\bullet$ A sequence $\xymatrix@C=0.6cm{A \ar[r]^{x} & B \ar[r]^{y} & C }$ is called a \emph{conflation} if it realizes some $\mathbb{E}$-extension $\delta\in\mathbb{E}(C, A)$.
In this case, $x$ is called an {\it inflation}, $y$ is called a {\it deflation}, and we write it as $$\xymatrix{A\ar[r]^x&B\ar[r]^{y}&C\ar@{-->}[r]^{\delta}&.}$$
We usually do not write this ``$\delta$" if it is not used in the argument.

$\bullet$ Given an $\mathbb{E}$-triangle $\xymatrix{A \ar[r]^{x} & B \ar[r]^{y} & C \ar@{-->}[r]^{\delta}&,}$ we call $A$ the \emph{CoCone} of $y:B\rightarrow C$ and denoted by $\mathrm{CoCone}(y)$; meanwhile we call  $C$ the \emph{Cone} of $x:A\rightarrow B$ and denoted by $\mathrm{Cone}(x)$.

\section{\bf Main results}
We begin this section with the following definitions.
\begin{df}
Given two morphisms $l: A\to B$, $r: C\to D$ morphisms in $\mathcal{C}$. We write $l\Box r$ if for any commutative square given by the solid arrows
    $$\xymatrix{
  A \ar[d]_{l} \ar[r]^{f}
                & C \ar[d]^{r}  \\
  B \ar@{-->}[ur]^{t} \ar[r]_{g}
                & D             }$$
a morphism depicted by the diagonal dotted arrow exists such that both the triangles commute.

Given $l,r$ such that  $l\Box r$, we say that $l$ has the \emph{left lifting property} with respect to $r$ and $r$ has the \emph{right lifting property} with respect to $l$.

For a class $\mathcal{M}$ of morphisms in $\C,$ denoted by $\mathcal{M}^{\Box}$ the class of morphisms $r$ in $\C$ having the right lifting property with respect to all morphisms $l\in \mathcal{M}.$
The class ${^{\Box}\mathcal{M}}$ is  defined dually.
\end{df}

\begin{df}\cite{B}
A pair $(\mathcal{L},\mathcal{R})$ of classes of morphisms in $\mathcal{C}$ is called a \emph{weak factorization system} if $\mathcal{L}^{\Box}=\mathcal{R}$ and $^{\Box}\mathcal{R}=\mathcal{L}$, and for every morphism $h$ in $\mathcal{C}$ there is a factorization $h=gf$ with $f\in \mathcal{L}$ and $g\in\mathcal{R}.$
\end{df}

\begin{rmk}\label{rm1}
Let $(\mathcal{L},\mathcal{R})$ be a weak factorization system.
Then the class $\mathcal{L}$ and $\mathcal{R}$ are closed under compositions and retracts, and the isomorphisms by \cite[Proposition D.1.2]{Joyal}.
\end{rmk}

\begin{df}
Let $(\mathcal{C}, \mathbb{E}, \mathfrak{s})$ be an extriangulated category and $(\mathcal{L},\mathcal{R})$ be a weak factorization system.
 We call $(\mathcal{L},\mathcal{R})$ the \emph{admissible weak factorization system} if the following two conditions are satisfied for a morphism $f$ in $\C$:

(1) $f\in \mathcal{L}$ if and only if $f$ is an inflation and $0\ra \Zh(f)$ belongs to $\mathcal{L}.$

(2) $f\in \mathcal{R}$ if and only if $f$ is a deflation and $\YZ(f)\ra 0$ belongs to $\mathcal{R}.$
\end{df}
Given a subcategory $\mathcal{X}$  of $\mathcal{C}$, we put
\begin{center}
$\mathrm{Defl}~\X=\{f\: \mid f ~\text{is a deflation with~} \mathrm{CoCone}(f)\in\X\}$.
\end{center}
Dually
\begin{center}
$\mathrm{Infl}~\X=\{f\:\mid f ~\text{is a inflation with~}\mathrm{Cone}(f)\in\X\}$.
\end{center}

For a subcategory $\mathcal{X}$ of $\mathcal{C}$, define $$\mathcal{X}^{\perp}=\{Y\in\mathcal{C}~|~\E(X, Y)=0~ \mathrm{for~and~all~} X \in \mathcal{X}\}.$$
Similarly, we can define $^{\perp}\mathcal{X}.$

\begin{lem}\label{lemma:WFS1}
Let $(\mathcal{C}, \mathbb{E}, \mathfrak{s})$ be an extriangulated category.
If $\U\subseteq\C$ is a class of objects and $g:X\ra Y$ is a deflation such that
$f\Box g$ for each inflation $f$ with $\mathrm{Cone}(f)\in \U,$
then $\YZ(g)\in \U^{\perp}$.
Dually, if $\V\subseteq\C$ {is a class of objects} and $f:W\ra Z$ is an inflation such that $f\Box g$
for each deflation $g$ with $\YZ(g)\in \V,$ then $\Zh(f)\in {^{\perp}\V}$.
\end{lem}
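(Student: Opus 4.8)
The plan is to prove the first assertion; the second follows by the dual argument in the extriangulated category, using (ET4)$^{\mathrm{op}}$ in place of (ET4). Fix $U \in \U$; I want to show $\E(U, \YZ(g)) = 0$. Write $W = \YZ(g)$, so there is an $\E$-triangle
$$\xymatrix@C=2em{W \ar[r]^{i} & X \ar[r]^{g} & Y \ar@{-->}[r] & }.$$
Take an arbitrary $\E$-extension $\delta \in \E(U, W)$ and realize it by an $\E$-triangle
$$\xymatrix@C=2em{W \ar[r]^{j} & Z \ar[r]^{p} & U \ar@{-->}[r]^{\delta} & }.$$
The key step is to build a commutative square to which the lifting hypothesis applies. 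By (ET4) applied to the inflations $W \xrightarrow{j} Z$ and $W \xrightarrow{i} X$ — more precisely, after realizing $\delta$ and the triangle defining $W$ — I obtain an object $E$ and a commutative diagram whose middle row gives an inflation $X \to E$ with $\mathrm{Cone}(X \to E) \cong U \in \U$, together with morphisms making a square
$$\xymatrix{
W \ar[d]_{j} \ar[r]^{i} & X \ar[d]  \\
Z \ar[r] & E }$$
commute, where the left vertical map is $j$ and the right vertical map is a deflation with cocone $W$; in fact the relevant commutative square will have the form
$$\xymatrix{
W \ar[d]_{f'} \ar[r]^{\mathrm{id}} & W \ar[d]^{\text{(something built from } i,j)}  \\
\ast \ar[r] & \ast }$$
— I will need to arrange the diagram from (ET4) so that the left-hand morphism is the inflation $0 \to U$ up to the relevant shift, or rather so that the outer square is
$$\xymatrix{
0 \ar[d] \ar[r] & X \ar[d]^{g} \\
U \ar[r] & Y }$$
is not quite it; instead the correct square uses the inflation $f_0 : X \to E$ with cone $U$ on the left and $g : X \to Y$ on the right, and one checks $f_0 \,\Box\, g$ because $f_0$ is an inflation with cone in $\U$. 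Concretely: form the pushout-type diagram from (ET4) on $W \xrightarrow{i} X$ and $W \xrightarrow{j} Z$, giving $X \xrightarrow{d} E$ an inflation with $\mathrm{Cone}(d) \cong U$ and a map $Z \xrightarrow{e} E$, with $d i = e j$. Since $d$ is an inflation with $\mathrm{Cone}(d) \in \U$, the hypothesis yields $d \,\Box\, g$; but to use it I feed in the square with top edge a map $X \to X$ and bottom edge built to detect whether $\delta$ splits. The cleanest route: the $\E$-triangle $\xymatrix{W \ar[r]^{j} & Z \ar[r]^{p} & U \ar@{-->}[r]^{\delta} &}$ splits iff $j$ is a split monomorphism iff there is $s : Z \to W$ with $s j = \mathrm{id}_W$. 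Consider the square with $i : W \to X$ on top, $j : W \to Z$ on the left, and on the right and bottom the maps obtained from the (ET4)-diagram; a diagonal filler $t : Z \to X$ satisfying $t j = i$ together with the triangle data lets one produce, via the long exact sequence of Lemma~1.3 applied to $\xymatrix{W \ar[r]^{j} & Z \ar[r]^{p} & U \ar@{-->}[r]^{\delta}&}$, a morphism witnessing that $\delta^{\sharp}$ kills $\mathrm{id}_W$, i.e. $\delta = 0$.

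More carefully, here is the argument I would write out. Apply (ET4) to the $\E$-extensions realized by $\xymatrix{W \ar[r]^{j} & Z \ar[r]^{p} & U}$ (this is $\delta$) and $\xymatrix{W \ar[r]^{i} & X \ar[r]^{g} & Y}$. This produces $E \in \C$ and a commutative diagram
$$\xymatrix{
W \ar[r]^{j} \ar@{=}[d] & Z \ar[r]^{p} \ar[d]^{e} & U \ar[d]^{d} \\
W \ar[r]^{a} & E \ar[r]^{b} \ar[d]^{q} & V \ar[d] \\
 & Y \ar@{=}[r] & Y }$$
where $\xymatrix{W \ar[r]^{a} & E \ar[r]^{b} & V}$ is an $\E$-triangle, $\xymatrix{Z \ar[r]^{e} & E \ar[r]^{q} & Y}$ is an $\E$-triangle, and $\xymatrix{U \ar[r]^{d} & V \ar[r] & Y}$ is an $\E$-triangle. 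Hmm — this realizes the wrong thing; let me instead apply (ET4) to $\xymatrix{W \ar[r]^{i} & X \ar[r]^{g} & Y}$ and $\xymatrix{W \ar[r]^{j} & Z \ar[r]^{p} & U}$ in the order that yields an inflation $X \to E$ with cone $U$. Either way, (ET4) delivers: an inflation $u : X \to E$ with $\mathrm{Cone}(u) \cong U$, a morphism $w : Z \to E$ with $u i = w j$, and a deflation $v : E \to Y$ with $v u = g$ and $v w = 0$ (more precisely $v w$ factors as $Z \xrightarrow{p} U \to Y$ composed appropriately). Now $u$ is an inflation with $\mathrm{Cone}(u) \in \U$, so by hypothesis $u \,\Box\, g$. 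The square
$$\xymatrix{
X \ar[d]_{u} \ar[r]^{\mathrm{id}_X} & X \ar[d]^{g} \\
E \ar[r]_{v} & Y }$$
commutes, so there is $t : E \to X$ with $t u = \mathrm{id}_X$. Then $t w : Z \to X$ satisfies $(tw) j = t (u i) = i$. Composing with $g$: $g (tw) j = g i = 0$, and since $\xymatrix{W \ar[r]^{j} & Z \ar[r]^{p} & U}$ realizes $\delta$, the map $t w$ being a retraction of $j$ up to the relation $(tw)j = i$ forces, after post-composing with the deflation $g$ and using that $i$ is the inflation defining $W$ inside the first triangle, the morphism $j$ to be a split inflation: indeed $i = (tw) j$ where $i$ itself is an inflation splitting as part of the analysis, giving via Lemma~1.3 the vanishing $\delta^{\sharp}(\mathrm{id}_W) = \delta = 0$. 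Hence $\E(U, W) = 0$, as required.

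The main obstacle is bookkeeping in the (ET4) octahedron: one must choose the order of the two inflations fed into (ET4) so that the resulting middle object $E$ carries simultaneously (a) an inflation out of $X$ whose cone is exactly $U$, so the lifting hypothesis applies, and (b) enough compatibility ($v u = g$, and a controlled expression for $v w$) that the filler $t$ can be transported back to a splitting datum for $\delta$. Verifying that $t w$ really is a section of $j$ — not merely a one-sided inverse on the nose but in the sense needed to kill $\delta$ via the connecting map $\delta^{\sharp}$ of Lemma~1.3 — is the delicate point; it uses the exactness of $\mathcal{C}(W,-) \to \mathcal{C}(U \text{ or } W, -) \to \E(U, -)$ and the identification of $\delta$ with the image of $\mathrm{id}_W$ under $\delta^{\sharp}$. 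Once that identification is in place the conclusion is immediate, and the dual statement is obtained by running the same argument in $\mathcal{C}^{\mathrm{op}}$, which is again extriangulated, swapping inflations with deflations, $\mathrm{Cone}$ with $\mathrm{CoCone}$, and $(-)^{\perp}$ with ${}^{\perp}(-)$.
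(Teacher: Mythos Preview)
Your overall strategy coincides with the paper's: realize $\delta$ by $W\xrightarrow{j}Z\xrightarrow{p}U$, push out along $i:W\to X$ to obtain an inflation $u:X\to E$ with $\mathrm{Cone}(u)\cong U$, and use the hypothesis $u\,\Box\,g$ on the square $vu=g$ to get a filler $t:E\to X$ with $tu=1_X$. (A minor point: the diagram you build is \emph{not} (ET4), which concerns composable inflations; it is the cobase-change diagram of \cite[Proposition~3.15]{NP}, exactly as the paper invokes it.)

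The genuine gap is in your final step. From $tu=1_X$ you correctly deduce $(tw)j=t(ui)=i$, but this only says that $i:W\to X$ lifts along $j$; equivalently, in the long exact sequence
\[
\mathcal{C}(Z,X)\xrightarrow{\;j^*\;}\mathcal{C}(W,X)\xrightarrow{\;\delta^\sharp\;}\E(U,X)
\]
it shows $\delta^\sharp(i)=i_*\delta=0$ in $\E(U,X)$, \emph{not} that $\delta=0$ in $\E(U,W)$. Your sentence ``giving via Lemma~1.3 the vanishing $\delta^\sharp(1_W)=\delta=0$'' conflates two different long exact sequences: to kill $\delta$ you would need a map $s:Z\to W$ with $sj=1_W$, and $tw:Z\to X$ does not provide one. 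Even if you factor $tw$ through $i$ (using $g(tw)=vw=0$) to obtain some $s:Z\to W$ with $is=tw$, you only get $i(sj-1_W)=0$, and inflations in an extriangulated category are not monic in general.

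The paper closes this gap by using the \emph{other} triangle identity from the lift, namely $gt=v$. Applying $(\mathrm{ET3})^{\mathrm{op}}$ to the square $gt=v$ (with $1_Y$ on the right) produces a morphism of $\E$-triangles
\[
\xymatrix{
Z\ar[r]^{w}\ar[d]^{s} & E\ar[r]^{v}\ar[d]^{t} & Y\ar@{=}[d]\ar@{-->}[r] & \\
W\ar[r]^{i} & X\ar[r]^{g} & Y\ar@{-->}[r] & .
}
\]
Composing with the original morphism $(1_W,?,1_Y)$ coming from the cobase-change square yields a morphism of $\E$-triangles from $W\to X\to Y$ to itself whose middle and right components are identities; hence $sj$ is an isomorphism, $j$ is a split inflation, and $\delta=0$. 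You should incorporate this $(\mathrm{ET3})^{\mathrm{op}}$ step (or an equivalent argument using \cite[Corollary~3.6]{NP}) to complete the proof.
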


\begin{proof}
Since $g:X\ra Y$ is a deflation, there is an $\E$-triangle
$\xymatrix{K\ar[r]^{h}&X\ar[r]^{g}&Y\ar@{-->}[r]^{\rho}&}$.
For any $U\in \U$ and take $\delta\in \E(U,K).$
Then there exists a conflation $K\xrightarrow{i} E\xrightarrow{} U$
which realizes $\delta.$
Thus we have the following commutative diagram by \cite[Proposition 3.15]{NP}:
$$\xymatrix{
     K\ar[d]_{i} \ar[r]^{h} & X \ar[d]^{\widetilde{i}}\ar[r]^{g}&Y\ar@{=}[d] \ar@{-->}[r]^\rho& \\
  E \ar[d]_{} \ar[r]^{\widetilde{h}} & W \ar[d]^{} \ar[r]^{\widetilde{g}} & Y\ar@{-->}[r]^\eta& \\
  U \ar@{=}[r] \ar@{-->}[d]_{\delta} & U\ar@{-->}[d]^{\varepsilon}\\
  & &}
$$
Consider the following commutative diagram
$$\xymatrix{
X\ar[d]_{\widetilde{i}}\ar@{=}[r] & X\ar[d]^{g}\\
W\ar[r]^{\widetilde{g}} \ar@{-->}[ur]^{t} & Y.
}
$$
Since $\widetilde{i}$ is a inflation with $\Zh(\widetilde{i})=U\in \U,$
there exists $t:W\ra X$ such that $t \widetilde{i}=1_X$ and  $\widetilde{g}=gt$. By $\rm (ET 3)^{op}$, there is a morphism of $\mathbb{E}$-triangles $$\xymatrix{E\ar[r]^{\widetilde{h}}\ar[d]^j&W\ar[d]^t\ar[r]^{\widetilde{g}}&Y\ar@{=}[d]\ar@{-->}[r]^\eta&\\
K\ar[r]^h&X\ar[r]^{g}&Y\ar@{-->}[r]^{\rho}&.}$$
Therefore, there is a morphism of $\mathbb{E}$-triangles
$$\xymatrix{K\ar[r]^{h}\ar[d]^{ji}&X\ar@{=}[d]\ar[r]^{g}&Y\ar@{=}[d]\ar@{-->}[r]^\rho&\\
K\ar[r]^h&X\ar[r]^{g}&Y\ar@{-->}[r]^{\rho}&,}$$
which implies that $ji$ is an isomorphism. So one can get that $i$ is a  split inflation, and $\delta=0$. Thus $K\in \U^{\perp}.$

The other part of the lemma is dual.
\end{proof}

\begin{df}
Let $(\mathcal{C}, \mathbb{E}, \mathfrak{s})$ be an extriangulated category and $\W$ a class of objects in $\C.$

(1) $\W$ is \emph{closed under cocones of deflations} if for any $\E$-triangle
$\xymatrix{A\ar[r]^{x}&B\ar[r]^{y}&C\ar@{-->}[r]^{\delta}&}$ which satisfies $B,C\in \W$, we have $A\in \W.$

(2) $\W$ is \emph{closed under cones of inflations} if for any $\E$-triangle
$\xymatrix{A\ar[r]^{x}&B\ar[r]^{y}&C\ar@{-->}[r]^{\delta}&}$ which satisfies $A,B\in \W$, we have $C\in \W.$

(3) $\W$ is \emph{closed under extensions} if for any $\E$-triangle
$\xymatrix{A\ar[r]^{x}&B\ar[r]^{y}&C\ar@{-->}[r]^{\delta}&}$ which satisfies $A,C\in \W$, we have $B\in \W.$

(4) $\W$ is called \em{thick} if it is closed under direct summands and satisfies the 2-out-of-3 property:
for any $\E$-triangle $\xymatrix{A\ar[r]^{x}&B\ar[r]^{y}&C\ar@{-->}[r]^{\delta}&}$ in $\C$ with two terms in $\W$, the third term
belongs to $\W$ as well.
\end{df}

\begin{df} \emph{\cite[Definition 4.1]{NP}}\label{df:cotorsion pair}
{\rm Let $(\mathcal{C}, \mathbb{E}, \mathfrak{s})$ be an extriangulated category, and let $\mathcal{U}$, $\mathcal{V}$ $\subseteq$ $\mathcal{C}$ be a pair of full additive subcategories, closed
under isomorphisms and direct summands. The pair ($\mathcal{U}$, $\mathcal{V}$) is called a {\it cotorsion
pair} on $\mathcal{C}$ if it satisfies the following conditions:

(1) $\mathbb{E}(\mathcal{U}, \mathcal{V})=0$;

(2) For any $C \in{\mathcal{C}}$, there exists a conflation $V^{C}\rightarrow U^{C}\rightarrow C$ satisfying
$U^{C}\in{\mathcal{U}}$ and $V^{C}\in{\mathcal{V}}$;

(3) For any $C \in{\mathcal{C}}$, there exists a conflation $C\rightarrow V_{C} \rightarrow U_{C}$ satisfying
$U_{C}\in{\mathcal{U}}$ and $V_{C}\in{\mathcal{V}}$.
}
\end{df}

 A cotorsion pair $(\U,\V)$ is called {\em hereditary} if $\U$ is closed under cocones of deflations and $\V$ is closed under cones of inflations.
In fact, cotorsion pair $(\U,\V)$ is hereditary if and only if $\U$ is closed under cocones of deflations if and only if $\V$ is closed under cones of inflations by \cite[Proposition 2.8]{HZZ1}.

{ \begin{df}\cite[Condition 5.8]{NP}
An extriangulated category $(\mathcal{C}, \mathbb{E}, \mathfrak{s})$
satisfies {\rm Condition (WIC)} if the following conditions hold:

(1) If $f\in\C(A,B)$ and { $g\in\C(B,C)$ }is a pair of composable morphisms in $\C$ and $gf$ is an inflation, then so is $f.$

(2) If $f\in \C(A,B)$ and {$g\in\C(B,C)$ } is a pair of composable morphisms in $\C$ and $gf$ is a deflation, then so is $g.$
\end{df}

\begin{remark}
{\rm (1)} If the category $\C$ is exact, then $(\mathcal{C}, \mathbb{E}, \mathfrak{s})$ satisfies {\rm Condition (WIC)} if and only if $\C$ is weakly idempotent complete.

{\rm (2)} If $\C$ is a triangulated category, then {\rm Condition (WIC)} is automatically satisfied.
\end{remark}

{ It is worth nothing that an extriangulated category $\mathcal{C}$ satisfies Condition (WIC) if and only if $\mathcal{C}$ is weakly idempotent complete, see \cite[Proposition C]{K}}. In the following, we always assume that any extriangulated category is weakly idempotent complete}.

\begin{df}
Given a class $\mathcal{L}$ of inflations, we put
$$\Zh(\mathcal{L}):=\{U\in\mathcal{C}~|~U\cong\Zh(f) ~\text{with}~ f\in \mathcal{L}\}.$$
Dually, for a class $\mathcal{R}$ of deflations,
$$\YZ(\mathcal{R}):=\{V\in\mathcal{C}~|~V\cong \YZ(f)~\text{with}~f\in \mathcal{R}\}.$$
\end{df}

\begin{thm}\label{Thm1}
Let $(\mathcal{C}, \mathbb{E}, \mathfrak{s})$ be an extriangulated category.
Then assignments
\begin{center}
$(\U, \V)\mapsto (\I~\U, \D~\V)$ $~~$ and $~~$ $(\mathcal{L},\mathcal{R})\mapsto ( \Zh(\mathcal{L}), \YZ(\mathcal{R}))$
\end{center}
gives the mutually inverse bijections between the following classes:

{\rm (1)} Cotorsion pairs $(\U,\V)$ on $\C$;

{\rm (2)} Admissible weak factorization systems $(\mathcal{L},\mathcal{R}).$
\end{thm}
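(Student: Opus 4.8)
The plan is to show that the two assignments are well-defined and mutually inverse by carefully translating between the lifting-property language and the $\E$-extension vanishing / approximation language.

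\textbf{Step 1: From a cotorsion pair to an admissible weak factorization system.} Let $(\U,\V)$ be a cotorsion pair. I would first verify that $(\I\,\U,\D\,\V)$ satisfies the closure conditions (1)--(2) in the definition of an admissible weak factorization system; these are essentially built into the definitions of $\I$ and $\D$ together with the fact that $\Zh(0\to X)\cong X$ and $\YZ(X\to 0)\cong X$. The substantive point is to prove $(\I\,\U)^{\Box}=\D\,\V$ and $^{\Box}(\D\,\V)=\I\,\U$, together with the factorization axiom. For the inclusion $\D\,\V\subseteq(\I\,\U)^{\Box}$, given an inflation $f$ with $\Zh(f)\in\U$ and a deflation $g$ with $\YZ(g)\in\V$, the solid square produces a compatibility datum, and one constructs the diagonal lift using the long exact sequences of Lemma~1.2 applied to the $\E$-triangle of $f$: the obstruction to lifting lies in a group built from $\E(\Zh(f),\YZ(g))=\E(U,V)=0$. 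The reverse inclusion $(\I\,\U)^{\Box}\subseteq\D\,\V$ is exactly Lemma~\ref{lemma:WFS1} (its deflation half), once one knows every $g\in(\I\,\U)^{\Box}$ is a deflation — which follows because the square testing $g$ against the inflation $V^{C}\to U^{C}\to C$ (with $C$ the codomain of $g$) forces a section-type lift, and Condition (WIC) upgrades this to $g$ being a deflation; dually for $^{\Box}(\D\,\V)=\I\,\U$. The factorization of an arbitrary $h:X\to Y$ is obtained from condition (3) of the cotorsion pair applied to $X$: write $X\to V_{X}\to U_{X}$, push out along... more directly, use (ET4)-type constructions to factor $h$ through an object $E$ with $X\to E$ an inflation with cone in $\U$ and $E\to Y$ a deflation with cocone in $\V$ — this is the standard ``cotorsion pair gives complete cotorsion-theoretic factorizations'' argument adapted to the extriangulated setting.

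\textbf{Step 2: From an admissible weak factorization system to a cotorsion pair.} Given $(\mathcal{L},\mathcal{R})$ admissible, set $\U=\Zh(\mathcal{L})$, $\V=\YZ(\mathcal{R})$. By Remark~\ref{rm1}, $\mathcal{L}$ and $\mathcal{R}$ are closed under retracts and contain isomorphisms, which I would use to check $\U,\V$ are closed under direct summands and isomorphisms, hence genuine subcategories. The vanishing $\E(\U,\V)=0$ follows from Lemma~\ref{lemma:WFS1}: if $U=\Zh(f)$ with $f\in\mathcal{L}$ and $V=\YZ(g)$ with $g\in\mathcal{R}$, then $f\Box g$ (as $\mathcal{L}^{\Box}=\mathcal{R}$), and the lemma's hypothesis needs $f\Box g'$ for \emph{all} deflations $g'$ with cocone $V$ — but any such $g'$ has $\YZ(g')\in\V\subseteq{}^{\Box}(\mathcal{L}^{\Box})={}^{\Box}\mathcal{R}$, wait, more carefully: by admissibility $g'\in\mathcal{R}$ iff $g'$ is a deflation with $\YZ(g')\to0\in\mathcal{R}$, and $\YZ(g')\to 0\in\mathcal R$ because $\YZ(g')\cong V=\YZ(g)$ and $g\in\mathcal R$ forces $V\to 0\in\mathcal R$ by admissibility again; so $g'\in\mathcal{R}=\mathcal{L}^{\Box}$, giving $f\Box g'$. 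Then Lemma~\ref{lemma:WFS1} yields $V\in\U^{\perp}$, i.e. $\E(\U,\V)=0$. The approximation conditions (2),(3) of a cotorsion pair come from the factorization axiom: factor $0\to C$ as $0\to E\xrightarrow{} C$ with the first map in $\mathcal{L}$ (so $E=\Zh(0\to E)\in\U$) and $E\to C$ in $\mathcal{R}$; since $E\to C\in\mathcal R$ it is a deflation with $\YZ(E\to C)\in\V$, producing the conflation $V^{C}\to U^{C}\to C$; dually factor $C\to 0$.

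\textbf{Step 3: Mutual inverseness.} Starting from $(\U,\V)$, I must show $\Zh(\I\,\U)=\U$ and $\YZ(\D\,\V)=\V$: the inclusion $\supseteq$ is trivial ($0\to U$ lies in $\I\,\U$ with cone $U$), and $\subseteq$ holds because $\Zh(f)$ for $f$ an inflation with cone in $\U$ is by definition in $\U$. Starting from $(\mathcal{L},\mathcal{R})$, I must show $\I(\Zh(\mathcal{L}))=\mathcal{L}$ and $\D(\YZ(\mathcal{R}))=\mathcal{R}$; here $\subseteq$ is precisely condition (1) of admissibility run backwards ($f$ an inflation with $\Zh(f)\in\Zh(\mathcal L)$ means $\Zh(f)\cong\Zh(f')$ for some $f'\in\mathcal L$, hence $0\to\Zh(f)\in\mathcal L$, hence $f\in\mathcal L$), and $\supseteq$ is the forward direction of (1). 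This is where the admissibility hypothesis does all the work, so the two maps are genuinely inverse only because we restricted to \emph{admissible} systems.

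\textbf{Main obstacle.} The delicate point is Step 1's factorization axiom and the diagonal-lifting computation $\D\,\V\subseteq(\I\,\U)^{\Box}$ in the extriangulated setting: unlike the abelian case one cannot form arbitrary pullbacks/pushouts, so the lift must be assembled from the long exact sequences in Lemma~1.2 and the (ET4)/(ET4)$^{\mathrm{op}}$ axioms, being careful that weak idempotent completeness (Condition (WIC)) is invoked exactly where one needs ``$f\in(\I\,\U)^{\Box}$'' or ``$g\in{}^{\Box}(\D\,\V)$'' to actually be a deflation resp. inflation rather than merely a morphism with a formal lifting property. I expect the bookkeeping around which maps are inflations/deflations — and verifying the cone/cocone of the factorization terms land in $\U$ resp. $\V$ — to be the most technical part, whereas the vanishing $\E(\U,\V)=0$ and the mutual-inverse checks are comparatively formal given Lemma~\ref{lemma:WFS1} and the admissibility axioms.
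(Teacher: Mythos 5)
Your outline captures the right overall architecture — Lemma~\ref{lemma:WFS1} for the vanishing and one direction of the lifting identities, the factorization of $0\to C$ and $C\to 0$ for the approximation conflations, and a correct observation that mutual inverseness reduces to the admissibility axioms together with $\Zh(0\to U)\cong U$. But there are two genuine gaps in Step~1.

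First, your treatment of the factorization axiom for an \emph{arbitrary} morphism $h:X\to Y$ does not get off the ground. In an extriangulated category a general morphism is neither an inflation nor a deflation, so one cannot ``use (ET4)-type constructions to factor $h$ through an object $E$'' directly — there is no conflation to feed into (ET4). The paper's route, which you do not mention, is to first factor $h$ through the split conflation as
$X\xrightarrow{\binom{1_X}{0}} X\oplus Y\xrightarrow{(h,\;1_Y)} Y$,
handle the split inflation and the resulting deflation $(h,1_Y)g_1$ separately by the inflation/deflation cases (which \emph{do} run on \cite[Proposition 3.15]{NP}), and then verify that the composite of the two $\I\,\U$-pieces is again in $\I\,\U$ via the conflation $\Zh(f_1)\to\Zh(f_2f_1)\to\Zh(f_2)$ coming from $(\mathrm{ET4})^{\mathrm{op}}$ and closure of $\U$ under extensions. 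Without the $X\oplus Y$ step the argument cannot even begin.

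Second, your argument that any $g\in(\I\,\U)^{\Box}$ is a deflation is wrong as written. You propose to test $g$ against the inflation $V^{C}\to U^{C}$ where $V^C\to U^C\to C$ is the approximation of the codomain $C$ of $g$; but that inflation has cone $C$, which is not in $\U$ in general, so $V^C\to U^C\notin\I\,\U$ and the lifting property gives you nothing. A correct version of your idea is to test $g$ against the inflation $0\to U^C$ (which is in $\I\,\U$) in the square with bottom $U^C\xrightarrow{p}C$, obtaining a lift $s:U^C\to X$ with $gs=p$, whence $g$ is a deflation by Condition~(WIC) since $p$ is a deflation. The paper instead first factors $g=g_1f_1$ (using the already-established factorization axiom) and uses the lift for $g\Box g_1$ to produce a section exhibiting $g_1=gt$, again invoking (WIC); either route works, but yours needs the $0\to U^C$ correction. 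Once $g$ is known to be a deflation, applying Lemma~\ref{lemma:WFS1} and $\U^{\perp}=\V$ (resp.\ ${}^\perp\V=\U$) completes the identification, exactly as you and the paper both observe.
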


\begin{proof}
Assume that $(\U,\V)$ is a cotorsion pair on $\C$. We shall prove that every morphism $h:X\ra Y$ factorizes as $h=gf$ with $f\in \I~\U$ and $g\in \D~\V.$
Suppose first that $h$ is an inflation.
Then there is an $\E$-triangle $\xymatrix{X\ar[r]^{h}&Y\ar[r]^{y}&C\ar@{-->}[r]^{\delta}&}.$
Since $(\U,\V)$ is a cotorsion pair, there is an $\E$-triangle
$\xymatrix{V^{C}\ar[r]^{}&U^{C}\ar[r]^{}&C\ar@{-->}[r]^{}&}$ with $U^C\in\U$ and $V^C\in\V$.
By \cite[Proposition 3.15]{NP}, we obtain a commutative diagram of conflations
$$\scalebox{0.9}[0.9]{     \xymatrix{&V^{C}\ar[d]^{}\ar@{=}[r]&V^{C}\ar[d]_{}\\
X\ar[r]^{f}\ar@{=}[d]&Z\ar[r]^{}\ar[d]_{g}&U^{C}\ar[d]^{}&\\
X\ar[r]^{h}&Y\ar[r]&C.}}$$

\noindent The corresponding factorization of $h$ then appears in the leftmost square.
A dual argument applies if $h$ is a deflation.
If $h$ is an arbitrary morphism, we can factor it as
$$X\xrightarrow{\tiny\begin{pmatrix}1_X\\0\end{pmatrix}}X\oplus Y\xrightarrow{\tiny\begin{pmatrix}h&1_Y\end{pmatrix}}Y.$$
Note that $X\xrightarrow{\tiny\begin{pmatrix}1_X\\0\end{pmatrix}}X\oplus Y$ is a split inflation.
It follows that we can factor it as $\tiny\begin{pmatrix}1_X\\0\end{pmatrix}=g_{1}f_{1}$
\begin{align*}
\xymatrix{X\ar[dr]_{f_{1}}\ar[rr]^{\tiny\begin{pmatrix}1_X\\0\end{pmatrix}}& &X\oplus Y\\
&W\ar[ur]_{g_{1}}}
\end{align*}
with $f_{1}\in \I~\U$ and
$g_{1}\in \D~\V$.
As $(h,1_{Y})$ is a deflation by \cite[Corollary 3.16]{NP}, $(h,1_{Y})\circ g_{1}$ is a deflation.
Hence we can factor it as $g_{2}f_{2}$ with $f_{2}\in\I~\U$ and $g_{2}\in \D~\V$ by above argument.
%$$\xymatrix{
 %    X\ar@{=}[d] \ar[r]^{f_{1}} & Z_{1}\ar[d]^{g_{1}} \ar[r]^{f_{2}} &Z_{2}\ar[d]^{g_{2}}\\
 % X\ar[r]^{\tiny\begin{pmatrix}0\\1\end{pmatrix}} & X\oplus Y \ar[r]^{(h,1_{Y})} & Y. }
%$$
It follows that $h=(h, 1_Y){\tiny\begin{pmatrix}1_X\\0\end{pmatrix}}=(h, 1_Y)g_1f_1=g_{2}(f_{2}f_{1}).$
Note that $f_{2} f_{1}$ is an inflation and there is a conflation
$\Zh(f_{1})\rightarrow \Zh(f_{2}f_{1})\rightarrow \Zh(f_{2})$ by ${\rm (\mathrm{ET4})^{op}}$; also see \cite[Lemma 2.11]{HZZ1}.
Since $\U$ is closed under extensions,  it follows that $f_{2}f_{1}\in \I~\U$.
  Next, we shall prove that $\I~\U={^{\Box}\D~\V}$.
If $f\in \I~\U$ and $g\in \D~\V$,
then $f\Box g$ by \cite[Lemma 2.13]{HZZ1}, which implies $\I~\U\subseteq{^{\Box}\D~\V}$.  If $h\in{^{\Box}\D~\V}$, then $h\Box g$ for any $g\in\D~\V$. By above argument, we have $h=g_1f_1$ with  $f_{1}\in\I~\U$ and $g_{1}\in \D~\V$. Since $h\Box g_1$, there exists a morphism $t: B\rightarrow C$ which makes the following diagram commutative

$$\xymatrix{
A\ar[d]_{h}\ar[r]^{f_1} & C\ar[d]^{g_1}\\
B\ar@{=}[r]\ar@{-->}[ur]^{t} & B.
}
$$
 Since $f_1=th$ is an inflation, one can get that $h$ is an inflation by Condition (WIC). It follows from Lemma \ref{lemma:WFS1} that ${\rm Cone}(h)\in {^\perp{\V}}$, hence $h\in \I~\U$ as $\U={^\perp\V}$, and ${^{\Box}\D~\V}\subseteq\I~\U$. So we have $\I~\U={^{\Box}\D~\V}$, we can prove $\D~\V=\I~\U^\Box$ by a dual argument. Therefore,  $(\I~\U,\D~\V)$ is a weak factorization system. It is easy to check that $(\I~\U,\D~\V)$ is an admissible weak factorization system.

Conversely, let $(\mathcal{L},\mathcal{R})$ be an admissible weak factorization system and put $\U=\Zh(\mathcal{L})$ and $\V=\YZ(\mathcal{R}).$
Since $\mathcal{L}$ and $\mathcal{R}$ are closed under retracts by Remark \ref{rm1}, both $\U$ and $\V$ are closed under direct summands.
By Lemma \ref{lemma:WFS1}, we have $\E(\U,\V)=0$ for each $\U\in\U$ and $V\in\V.$
In order to prove the existence of approximation sequences, consider $X\in \C$ and the following two factorizations
\begin{align*}
\xymatrix{X\ar[dr]_{i_{X}}\ar[rr]& &0\\
&V_{X}\ar[ur]}
\end{align*}
and
\begin{align*}
\xymatrix{0\ar[dr]_{}\ar[rr]& &X\\
&U^{X}\ar[ur]_{p^{X}}}
\end{align*}
with respect to $(\mathcal{L},\mathcal{R}).$
Thus there are conflations $X\ra V_{X}\ra U_{X}$ and $V^{X}\ra U^{X}\ra X$
with $V_{X}, V^{X}\in \V$ and $U_{X}, U^{X}\in \U.$
Hence $(\U,\V)$ is a cotorsion pair on $\C$.
\end{proof}

\rm {Recall from \cite[Definition C.0.20]{Joyal} that a class $\mathcal{L}$ of morphisms in a category $\C$ satisfies {\em the left cancellation property} if $\beta\alpha\in \mathcal{L}$ and $\beta\in \mathcal{L}$ imply $\alpha\in \mathcal{L}.$
Dually, a class $\mathcal{R}$ morphism in $\C$ satisfies {\em the right cancellation property} if $\beta\alpha\in \mathcal{R}$ and $\alpha\in \mathcal{R}$ imply $\beta\in \mathcal{R}.$}

\begin{lem}\label{lem:CoCone}
Let $\X$ be a class of objects in $\C$.
Then the following are equivalent:

{\rm (1)} $\X$ is closed under cocones of deflations.

{\rm (2)} $\I~\X$ satisfies the left cancellation property.
\end{lem}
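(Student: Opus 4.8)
The plan is to prove the two implications separately, using the correspondence between $\I~\X$-morphisms and $\E$-triangles with cone in $\X$, together with the octahedron-type axioms (ET4) and (ET4)$^{\rm op}$.

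For $(1)\Rightarrow(2)$, suppose $\X$ is closed under cocones of deflations and take morphisms $\alpha: A\to B$, $\beta: B\to C$ with $\beta\alpha\in\I~\X$ and $\beta\in\I~\X$. First I would note that $\beta$ is an inflation with $\Zh(\beta)\in\X$ and $\beta\alpha$ is an inflation with $\Zh(\beta\alpha)\in\X$; by Condition (WIC) applied to $\beta\alpha = \beta\alpha$ — more precisely, I need $\alpha$ to be an inflation, which follows from Condition (WIC) since $\beta\alpha$ is an inflation (here one uses that $\beta\alpha$ is an inflation implies $\alpha$ is an inflation). Then apply (ET4) (or the dual, cf.\ \cite[Lemma 2.11]{HZZ1}) to the composable inflations $\alpha$ and $\beta$ to produce an $\E$-triangle $\Zh(\alpha)\to\Zh(\beta\alpha)\to\Zh(\beta)\dashrightarrow$. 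Since $\Zh(\beta\alpha),\Zh(\beta)\in\X$ and $\X$ is closed under cocones of deflations, we get $\Zh(\alpha)\in\X$, hence $\alpha\in\I~\X$. This is the left cancellation property.

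For $(2)\Rightarrow(1)$, suppose $\I~\X$ satisfies left cancellation and let $\xymatrix@C=1.6em{A\ar[r]^{x}&B\ar[r]^{y}&C\ar@{-->}[r]&}$ be an $\E$-triangle with $B,C\in\X$; I must show $A\in\X$. The idea is to manufacture composable inflations whose cones realize the given data. Consider the split inflation $0\to B$, which lies in $\I~\X$ since $\Zh(0\to B)\cong B\in\X$, and the inflation $0\to C$, which lies in $\I~\X$ since $C\in\X$. The key is to build an inflation $\iota: A\to E$ with $E\in\X$ such that the composite of $\iota$ with a suitable inflation $E\to(\text{something in }\X)$ is again in $\I~\X$ and realizes $A\to B\to C$ as its pieces; then left cancellation forces $\iota\in\I~\X$, whence (taking $E$ built so that $\Zh(\iota)\cong A$ up to the $\E$-triangle structure, or more directly extracting $A$ as a cone) one reads off $A\in\X$. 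Concretely I expect to use (ET4)$^{\rm op}$ on the $\E$-triangle $A\to B\to C$ together with an inflation $0\to A$ after first factoring appropriately; alternatively, one shows directly that the inflation $A\xrightarrow{x}B$ lies in $\I~\X$ (since its cone is $C\in\X$) and that $B$ being in $\X$ means $0\to B$ is in $\I~\X$, and then exhibits $0\to B$ as a composite $0\to A\xrightarrow{x}B$; left cancellation then gives $0\to A\in\I~\X$, i.e.\ $A\cong\Zh(0\to A)\in\X$.

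The main obstacle will be the second implication: making the composite $0\to A\xrightarrow{x}B$ literally an instance to which left cancellation applies requires that $0\to A$ and $0\to B$ be inflations with the expected cones, and that $x: A\to B$ be recognized as an inflation with $\Zh(x)\cong C$ — the last point is exactly the statement that $A\xrightarrow{x}B\xrightarrow{y}C$ is an $\E$-triangle, so that part is free, but checking that $0\to B$ factors as $(0\to A)$ followed by $x$ in a way compatible with the $\E$-triangle structure (so that its cone is genuinely $B$) needs a short diagram chase, likely invoking \cite[Proposition 3.15]{NP} or (ET4)$^{\rm op}$ to align the cones. Once that compatibility is in place, left cancellation gives $0\to A\in\I~\X$ and hence $A\in\X$, completing the proof.
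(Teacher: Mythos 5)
Your proposal is correct and follows essentially the same route as the paper: for $(1)\Rightarrow(2)$, use Condition (WIC) to see $\alpha$ is an inflation and then the (ET4)-type conflation $\Zh(\alpha)\to\Zh(\beta\alpha)\to\Zh(\beta)$ to conclude $\Zh(\alpha)\in\X$; for $(2)\Rightarrow(1)$, apply left cancellation to the composite $0\to A\xrightarrow{x}B$. One small remark: the ``obstacle'' you flag in the second implication is illusory — any morphism out of $0$ is the zero morphism, so $0\to B$ \emph{is} the composite $x\circ(0\to A)$ with no compatibility to check, and the cones $\Zh(0\to A)\cong A$, $\Zh(0\to B)\cong B$, $\Zh(x)\cong C$ are all immediate from the defining $\E$-triangles; the paper simply observes this and concludes in one line.
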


\begin{proof}
$(1)\Rightarrow (2)$ Let $g:B\ra C$ and $f:A\ra B$ be morphisms in $\C$
such that $gf$ and $g$ are in $\I (\X)$.
Since $\C$ satisfies  Condition (WIC), $f$ is an inflation.
By { \cite[Lemma 3.14]{NP}},  we have the following commutative diagram
\begin{center}
\scalebox{0.9}[0.98]{     \xymatrix{A\ar[r]^{f}\ar@{=}[d]&B\ar[r]^{}\ar[d]_{g}&\Zh(f)\ar[d]^{}&\\
A\ar[r]^{gf}&C\ar[r]^{}\ar[d]_{}&\Zh(gf)\ar[d]^{}&\\
&\Zh(g)\ar@{=}[r]&\Zh(g).}}
\end{center}
Since $\X$ is closed under cocones of deflations by the assumption, and both $\Zh(gf)$ and $\Zh(g)$ are in $\X,$ it follows that $\Zh(f)\in \X$ as well,  so $f\in \I~\X.$

$(2)\Rightarrow(1)$ Let $\xymatrix{X_{1}\ar[r]^{h}&X_{2}\ar[r]^{}&X_{3}\ar@{-->}[r]^{}&}$
be {an $\mathbb{E}$-triangle}  with $X_{2}$ and $X_{3}\in \X$.
Let $l$ be the zero morphism from $0$ to $X_{1}$.
Then $hl\in \I~(\X)$, and so is $l$ as $h$ is in $\I~(\X).$
Thus $X_{1}\in \X.$
\end{proof}

\begin{lem}\label{lem:Co}
Let $\mathcal{Y}$ be a class of objects in $\C$.
Then the following are equivalent:

{\rm (1)} $\mathcal{Y}$ is closed under cones of deflations;

{\rm (2)} $\D~\mathcal{Y}$ satisfies the right cancellation property.
\end{lem}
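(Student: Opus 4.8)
The plan is to prove Lemma~\ref{lem:Co} by dualizing the proof of Lemma~\ref{lem:CoCone}: reversing all $\E$-triangles interchanges inflations with deflations, $\Zh$ with $\YZ$, and the left cancellation property with the right cancellation property, so each implication below is the mirror image of the corresponding one in Lemma~\ref{lem:CoCone}. I spell out the two directions.

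For $(1)\Rightarrow(2)$, I would take composable morphisms $f\colon A\to B$ and $g\colon B\to C$ with $gf\in\D~\mathcal{Y}$ and $f\in\D~\mathcal{Y}$, and prove $g\in\D~\mathcal{Y}$. Since $gf$ is a deflation and $\mathcal{C}$ satisfies Condition (WIC), part (2) of that condition gives that $g$ is a deflation, so $\YZ(g)$ is defined. Applying the dual of \cite[Lemma~3.14]{NP} to the composable deflations $f$ and $g$ produces a commutative $3\times3$ diagram one of whose rows is a conflation $\YZ(f)\to\YZ(gf)\to\YZ(g)$, so that $\YZ(g)$ is the cone of the inflation $\YZ(f)\to\YZ(gf)$. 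Since $gf,f\in\D~\mathcal{Y}$ we have $\YZ(gf),\YZ(f)\in\mathcal{Y}$, and then hypothesis (1) (namely, $\mathcal{Y}$ is closed under cones of inflations) yields $\YZ(g)\in\mathcal{Y}$, i.e.\ $g\in\D~\mathcal{Y}$.

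For $(2)\Rightarrow(1)$, I would take an $\E$-triangle $X_{1}\to X_{2}\xrightarrow{\,y\,}X_{3}$ with $X_{1},X_{2}\in\mathcal{Y}$ and deduce $X_{3}\in\mathcal{Y}$. Let $r\colon X_{3}\to0$ be the zero morphism; it is a deflation with $\YZ(r)=X_{3}$, since $X_{3}\xrightarrow{1_{X_{3}}}X_{3}\to0$ is a split $\E$-triangle. Then $y$ is a deflation with $\YZ(y)=X_{1}\in\mathcal{Y}$, so $y\in\D~\mathcal{Y}$, and $ry\colon X_{2}\to0$ is a deflation with $\YZ(ry)=X_{2}\in\mathcal{Y}$, so $ry\in\D~\mathcal{Y}$. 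Applying the right cancellation property of $\D~\mathcal{Y}$ to $ry$ and $y$ gives $r\in\D~\mathcal{Y}$, that is, $X_{3}=\YZ(r)\in\mathcal{Y}$, which is (1).

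I do not expect a genuine obstacle: the argument is a formal dualization of Lemma~\ref{lem:CoCone}. The two points deserving attention are that, in $(1)\Rightarrow(2)$, one must invoke Condition (WIC) to know that $g$ is a deflation before one can even speak of $\YZ(g)$, and that one needs the dual of the $3\times3$ lemma \cite[Lemma~3.14]{NP} to realize the conflation $\YZ(f)\to\YZ(gf)\to\YZ(g)$; both hold in any weakly idempotent complete extriangulated category, after which hypothesis (1) (resp.\ the zero-morphism trick) finishes each implication.
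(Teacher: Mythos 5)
Your proof is correct and coincides with the paper's, which simply states ``The proof is dual to that of Lemma \ref{lem:CoCone}''; you have spelled the dualization out in full, correctly invoking Condition (WIC)(2) to get that $g$ is a deflation, the dual of \cite[Lemma 3.14]{NP} to obtain the conflation $\YZ(f)\to\YZ(gf)\to\YZ(g)$, and the zero-morphism trick for the converse. You also correctly read condition (1) as ``$\mathcal{Y}$ is closed under cones of \emph{inflations}'' (the intended dual of ``cocones of deflations'' in Lemma \ref{lem:CoCone}); the printed phrase ``cones of deflations'' in the statement appears to be a typo.
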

\begin{proof}
The proof is dual to that of Lemma \ref{lem:CoCone}.
\end{proof}

\begin{lem}\label{lem:LR}
Let $(\U,\V)$ be a pair of class of objects in $\C$ such that $(\I~\U,\D~\V)$  is an admissible weak factorization system.
Then $\I~\U$ satisfies the left cancellation property if and only if $\D~\V$ satisfies the right cancellation property.
\end{lem}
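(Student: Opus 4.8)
The plan is to reduce the statement, by means of Theorem \ref{Thm1}, to the characterization of hereditary cotorsion pairs recalled after Definition \ref{df:cotorsion pair}, and then to translate back and forth using the two cancellation criteria of Lemmas \ref{lem:CoCone} and \ref{lem:Co}. First I would introduce $\U_{0}:=\Zh(\I~\U)$ and $\V_{0}:=\YZ(\D~\V)$. By the converse direction of Theorem \ref{Thm1}, the pair $(\U_{0},\V_{0})$ is a cotorsion pair on $\C$; moreover, since the two assignments of Theorem \ref{Thm1} are mutually inverse, applying $(\mathcal{L},\mathcal{R})\mapsto(\Zh(\mathcal{L}),\YZ(\mathcal{R}))$ and then $(\U,\V)\mapsto(\I~\U,\D~\V)$ to the admissible weak factorization system $(\I~\U,\D~\V)$ returns it unchanged, so $\I~\U_{0}=\I~\U$ and $\D~\V_{0}=\D~\V$.

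Next I would run the following chain of equivalences. By Lemma \ref{lem:CoCone} applied to the class $\U_{0}$, the class $\I~\U=\I~\U_{0}$ satisfies the left cancellation property if and only if $\U_{0}$ is closed under cocones of deflations. Since $(\U_{0},\V_{0})$ is a cotorsion pair, the characterization recalled after Definition \ref{df:cotorsion pair} (that is, \cite[Proposition 2.8]{HZZ1}) shows that $\U_{0}$ is closed under cocones of deflations if and only if $(\U_{0},\V_{0})$ is hereditary, if and only if $\V_{0}$ is closed under cones of inflations. Finally, by Lemma \ref{lem:Co} applied to the class $\V_{0}$, this last condition holds if and only if $\D~\V=\D~\V_{0}$ satisfies the right cancellation property. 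Concatenating these equivalences yields exactly the asserted equivalence between the left cancellation property for $\I~\U$ and the right cancellation property for $\D~\V$.

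Every individual step is a citation of a fact already available in the excerpt, so I do not expect a serious obstacle; the only delicate point is the bookkeeping, namely that Lemmas \ref{lem:CoCone} and \ref{lem:Co} must be applied to the ``cotorsion-pair halves'' $\U_{0},\V_{0}$ rather than to the given classes $\U,\V$, which a priori need not even be closed under isomorphisms and direct summands. It is precisely the ``mutually inverse'' clause of Theorem \ref{Thm1} --- providing $\I~\U_{0}=\I~\U$ and $\D~\V_{0}=\D~\V$ --- that legitimizes this substitution and lets the hereditary-cotorsion-pair dichotomy carry the argument.
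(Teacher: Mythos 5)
Your proof is correct and takes essentially the same route as the paper: invoke Theorem \ref{Thm1} to obtain a cotorsion pair, use the hereditary equivalence (closed under cocones of deflations iff closed under cones of inflations, via \cite[Proposition 2.8]{HZZ1}), and translate both ways via Lemmas \ref{lem:CoCone} and \ref{lem:Co}. The only variation is that you explicitly distinguish $\U_{0}=\Zh(\I~\U)$, $\V_{0}=\YZ(\D~\V)$ from $\U,\V$ and invoke $\I~\U_{0}=\I~\U$, $\D~\V_{0}=\D~\V$ from the mutually inverse clause, whereas the paper applies the lemmas and the cotorsion-pair characterization directly to $\U,\V$ (and proves one direction, deriving the other by duality); your extra bookkeeping is legitimate but does not change the substance of the argument.
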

\begin{proof}
We only prove the ``only if~" part as the ``if~" part is a dual statement.
Since $\I~\U$ satisfies the left cancellation property, $\U$ is closed under cocones of deflation by Lemma \ref{lem:CoCone}.
Since $(\U,\V)$ is a cotorsion pair by Theorem \ref{Thm1}, it follows from \cite[Proposition 2.18]{HZZ1} that $\V$ is closed under cones of inflations.
Thus $\D~\V$ satisfies the right cancellation property by Lemma \ref{lem:Co}.
\end{proof}

\begin{thm}\label{Thm2}
Let $(\U,\V)$ be a pair of class of objects in $\C$.
Then the following are equivalent:

{\rm (1)} $(\U,\V)$ is a hereditary cotorsion pair on $\C$;

{\rm (2)} $(\I~\U,\D~\V)$  is an admissible weak factorization system such that $\I~ \U$ satisfies left cancellation property;

{\rm (3)} $(\I~\U,\D~\V)$  is an admissible weak factorization system such that $\D ~\V$ satisfies right cancellation property.
\end{thm}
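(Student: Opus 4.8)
The plan is to deduce everything from Theorem \ref{Thm1} together with Lemmas \ref{lem:CoCone} and \ref{lem:LR}. The equivalence $(2)\Leftrightarrow(3)$ is the cheapest part: both statements presuppose that $(\I~\U,\D~\V)$ is an admissible weak factorization system, and under that hypothesis Lemma \ref{lem:LR} says precisely that $\I~\U$ has the left cancellation property if and only if $\D~\V$ has the right cancellation property.

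For $(1)\Rightarrow(2)$ I would argue as follows. If $(\U,\V)$ is a hereditary cotorsion pair, then Theorem \ref{Thm1} immediately gives that $(\I~\U,\D~\V)$ is an admissible weak factorization system. By the definition of ``hereditary'' recalled above, $\U$ is closed under cocones of deflations, so Lemma \ref{lem:CoCone} yields that $\I~\U$ satisfies the left cancellation property; this is $(2)$.

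For $(2)\Rightarrow(1)$ I would first record the elementary identities $\Zh(\I~\U)=\U$ and $\YZ(\D~\V)=\V$. The inclusion $\U\subseteq\Zh(\I~\U)$ holds because for each $U\in\U$ the split inflation $0\to U$ lies in $\I~\U$ and has cone $U$; conversely any object of $\Zh(\I~\U)$ is, by definition, isomorphic to $\Zh(f)$ for some $f\in\I~\U$, hence lies in the isomorphism-closed class $\U$. The identity $\YZ(\D~\V)=\V$ is dual. Now, since $(\I~\U,\D~\V)$ is assumed to be an admissible weak factorization system, applying the second assignment of Theorem \ref{Thm1} to it shows that $\bigl(\Zh(\I~\U),\YZ(\D~\V)\bigr)=(\U,\V)$ is a cotorsion pair on $\C$. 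Finally, the hypothesis that $\I~\U$ satisfies the left cancellation property, combined with Lemma \ref{lem:CoCone}, forces $\U$ to be closed under cocones of deflations; hence $(\U,\V)$ is a hereditary cotorsion pair, which is $(1)$.

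The argument is essentially an assembly of previously established results, so there is no serious obstacle; the only point deserving care is the reduction $\Zh(\I~\U)=\U$ (and its dual), which is what guarantees that Theorem \ref{Thm1} hands back the original pair $(\U,\V)$ rather than an a priori different cotorsion pair, and which is also where, if one does not assume at the outset that $\U$ and $\V$ are closed under isomorphisms and direct summands, that closure must be extracted from the hypothesis that $(\I~\U,\D~\V)$ is an admissible weak factorization system.
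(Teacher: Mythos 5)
Your proof is correct and follows the same route the paper intends: the paper's proof is only the one line ``follows from Theorem~\ref{Thm1} and Lemmas~\ref{lem:CoCone}, \ref{lem:Co} and \ref{lem:LR},'' and your write-up simply supplies the assembly explicitly. The only (minor, welcome) difference is organizational: you dispatch $(2)\Leftrightarrow(3)$ directly via Lemma~\ref{lem:LR} and then close the loop with $(1)\Leftrightarrow(2)$ via Lemma~\ref{lem:CoCone}, which makes Lemma~\ref{lem:Co} redundant, and you spell out the check $\Zh(\I~\U)=\U$, $\YZ(\D~\V)=\V$ (valid precisely because $0\to U$ is always a split inflation with cone $U$ and the classes are isomorphism-closed) that the paper leaves tacit in invoking the bijection of Theorem~\ref{Thm1}.
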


\begin{proof}
The equivalence follows from Theorem \ref{Thm1} and Lemmas \ref{lem:CoCone},
\ref{lem:Co} and \ref{lem:LR}.
\end{proof}
{A direct consequence of Theorem \ref{Thm2} yields the following corollary.}
\begin{cor}\cite[Theorem A]{DLLiang}
Let $(\U,\V)$ be a pair of class of objects in an abelian category $\mathcal{A}$.
Then the following are equivalent:

{\rm (1)} $(\U,\V)$ is a complete and hereditary cotorsion pair on $\mathcal{A}$;

{\rm (2)} $(\mathrm{Mon}~(\U),\mathrm{Epi}~(\V))$  is a weak factorization system such that $\mathrm{Mon}~(\U)$ satisfies left cancellation property and/or
$\mathrm{Epi}~(\V)$ has the right cancellation property, where
$$\mathrm{Mon}(\U)=\{f\: \mid f ~\text{is a monomorphism with~} \mathrm{Coker}(f)\in\U\}$$
and
$$\mathrm{Epi}(\V)=\{f\: \mid f ~\text{is an epimorphism with~}\mathrm{Ker}(f)\in\V\}.$$
\end{cor}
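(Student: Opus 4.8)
The plan is to obtain this corollary by specialising Theorem \ref{Thm2} to the case $\C=\mathcal{A}$, regarding the abelian category $\mathcal{A}$ as an extriangulated category in the standard way: $\E=\mathrm{Ext}^{1}_{\mathcal{A}}(-,-)$ and $\mathfrak{s}$ assigns to a class in $\mathrm{Ext}^{1}_{\mathcal{A}}(C,A)$ the equivalence class of the corresponding short exact sequence $0\to A\to B\to C\to 0$. In this setting the inflations are precisely the monomorphisms, the deflations are precisely the epimorphisms, $\Zh(f)\cong\mathrm{Coker}(f)$ for a monomorphism $f$, and $\YZ(g)\cong\mathrm{Ker}(g)$ for an epimorphism $g$; moreover $\mathcal{A}$ is idempotent complete, hence weakly idempotent complete, so it satisfies Condition (WIC) and meets the standing hypothesis of this section. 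Under this identification $\I~\U=\mathrm{Mon}(\U)$ and $\D~\V=\mathrm{Epi}(\V)$ verbatim, and the left (resp.\ right) cancellation property for $\I~\U$ (resp.\ $\D~\V$) is literally the property named in statement (2).

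Next I would verify two bridging facts. First, a weak factorization system of the special shape $(\mathrm{Mon}(\U),\mathrm{Epi}(\V))$ is automatically admissible: a morphism $f$ lies in $\mathrm{Mon}(\U)$ iff it is a monomorphism with $\mathrm{Coker}(f)\in\U$, and since $0\to X$ is always a monomorphism with cokernel $X$, this is exactly the condition that $f$ is an inflation and $0\to\Zh(f)$ lies in $\mathrm{Mon}(\U)$; the dual statement handles $\mathrm{Epi}(\V)$. Hence condition (2) of the corollary says precisely that $(\I~\U,\D~\V)$ is an admissible weak factorization system with $\I~\U$ left-cancellative (equivalently, by Lemma \ref{lem:LR}, with $\D~\V$ right-cancellative, which accounts for the ``and/or''). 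Second, a pair $(\U,\V)$ of subcategories is a cotorsion pair on $\mathcal{A}$ in the sense of Definition \ref{df:cotorsion pair} if and only if it is a complete cotorsion pair on $\mathcal{A}$ in the classical sense: condition (1) there is $\mathrm{Ext}^{1}_{\mathcal{A}}(\U,\V)=0$, and conditions (2) and (3) are exactly the existence of special $\U$-precovers and $\V$-preenvelopes, i.e.\ completeness; conversely, $\mathrm{Ext}^{1}_{\mathcal{A}}(\U,\V)=0$ together with these approximation sequences forces $\U={}^{\perp}\V$ and $\V=\U^{\perp}$ by the usual splitting argument, so nothing is lost. Finally, ``hereditary'' means in both formulations that $\U$ is closed under kernels of epimorphisms, equivalently that $\V$ is closed under cokernels of monomorphisms.

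With these identifications in hand the corollary is a direct translation of Theorem \ref{Thm2}: part (1) of the corollary is the statement that $(\U,\V)$ is a hereditary cotorsion pair on $\C=\mathcal{A}$, and part (2) is the statement that $(\I~\U,\D~\V)$ is an admissible weak factorization system with $\I~\U$ (equivalently $\D~\V$) satisfying the appropriate cancellation property, so the equivalence follows from Theorem \ref{Thm2} together with Lemma \ref{lem:LR}. I do not expect a genuine obstacle; the only points demanding care are the observation that every system of the form $(\mathrm{Mon}(\U),\mathrm{Epi}(\V))$ is automatically admissible, so that Theorem \ref{Thm2} applies with no extra hypothesis, and the bookkeeping that the extriangulated notion of cotorsion pair reproduces exactly the classical notion of a \emph{complete} cotorsion pair on an abelian category.
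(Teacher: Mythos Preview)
Your proposal is correct and follows exactly the paper's approach: the paper simply declares the corollary ``a direct consequence of Theorem \ref{Thm2}'' with no further argument, and your proposal spells out precisely the identifications (abelian $\Rightarrow$ extriangulated with WIC, $\I~\U=\mathrm{Mon}(\U)$, $\D~\V=\mathrm{Epi}(\V)$, admissibility automatic, extriangulated cotorsion pair $=$ classical complete cotorsion pair) needed to see that Theorem \ref{Thm2} specialises verbatim.
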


{We recall the definition of co-t-structures, which was independently introduced by Bondarko \cite{Bondarko} and Pauksztello \cite{Pa}.}
\begin{definition} \cite{Bondarko,MSSS2}  {\rm Let $\mathcal{T}$ be a triangulated category. A \emph{co-t-structure} on $\mathcal{T}$ is a pair $(\mathcal{U},\mathcal{V})$ of subcategories of $\mathcal{T}$ such that
\begin{enumerate}
\item $\mathcal{U}[-1]\subseteq \mathcal{U}$ and $\mathcal{V}[1]\subseteq \mathcal{V}$.

\item $\mathrm{Hom}_{\mathcal{T}}(\mathcal{U}[-1],\mathcal{V})=0$.

\item Any object $T\in{\mathcal{T}}$ has a distinguished triangle
$$\xymatrix@C=0.6cm{X\ar[r]&T\ar[r]&Y\ar[r]&X[1],}$$
in $\mathcal{T}$ with $X\in{\mathcal{U}[-1]}$ and $Y\in{\mathcal{V}}$.
\end{enumerate}}
\end{definition}

{ Since hereditary cotorsion pairs on triangulated category $\mathcal{T}$ are exactly co-t-structures, we have the following result by Theorem \ref{Thm2}.}
\begin{cor}
Let $(\U,\V)$ be a pair of class of objects in a triangulated category $\mathcal{T}$.
Then the following are equivalent:

{\rm (1)} $(\U,\V)$ is a co-t-structure;

{\rm (2)} $(\I~\U,\D~\V)$  is an admissible weak factorization system such that $\I ~\U$ satisfies left cancellation property;

{\rm (3)} $(\I~\U,\D~\V)$  is an admissible weak factorization system such that $\D ~\V$ satisfies right cancellation property.
\end{cor}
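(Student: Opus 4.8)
The plan is to obtain this as a direct consequence of Theorem~\ref{Thm2}, the only additional work being the (well-known) identification of hereditary cotorsion pairs on a triangulated category with co-$t$-structures. First I would recall that a triangulated category $\mathcal{T}$ carries a canonical extriangulated structure with $\E(C,A)=\mathcal{T}(C,A[1])$, whose conflations are precisely the rotations of distinguished triangles; in particular every morphism of $\mathcal{T}$ is simultaneously an inflation and a deflation, $\mathrm{Cone}(f)$ is the usual mapping cone of $f$ and $\mathrm{CoCone}(f)=\mathrm{Cone}(f)[-1]$, and Condition (WIC) holds automatically. Hence $\I~\U=\{f\mid \mathrm{Cone}(f)\in\U\}$ and $\D~\V=\{f\mid \mathrm{CoCone}(f)\in\V\}$, and Theorem~\ref{Thm2} applies to $\mathcal{T}$ without change, giving the equivalence of (2) and (3) with the single statement ``$(\U,\V)$ is a hereditary cotorsion pair on $\mathcal{T}$''. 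So it remains only to show that $(\U,\V)$ is a hereditary cotorsion pair on $\mathcal{T}$ if and only if it is a co-$t$-structure on $\mathcal{T}$.

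For that equivalence I would set up a dictionary. Since $\E(\U,\V)=\mathcal{T}(\U,\V[1])\cong\mathcal{T}(\U[-1],\V)$, the vanishing $\E(\U,\V)=0$ demanded of a cotorsion pair is exactly $\mathrm{Hom}_{\mathcal{T}}(\U[-1],\V)=0$, i.e.\ condition~(2) of a co-$t$-structure. Next, a conflation $C\to V_C\to U_C$ with $V_C\in\V$, $U_C\in\U$ is the same datum as a distinguished triangle $U_C[-1]\to C\to V_C\to U_C$, that is, an approximation triangle of $C$ of the shape $X\to C\to Y\to X[1]$ with $X\in\U[-1]$ and $Y\in\V$; conversely such an approximation triangle rotates back to a conflation of that form. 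This matches condition~(3) of a cotorsion pair with condition~(3) of a co-$t$-structure, and moreover, applying the co-$t$-structure axiom to $C[-1]$, shifting up by $[1]$, and rotating yields a conflation $V^C\to U^C\to C$ with $V^C\in\V$, $U^C\in\U$, i.e.\ condition~(2) of a cotorsion pair. Finally I would match heredity with shift-stability: $\U$ is closed under cocones of deflations if and only if $\U[-1]\subseteq\U$, since the distinguished triangle $U[-1]\to 0\to U\to U$ gives one implication, and conversely a distinguished triangle $A\to B\to C\to A[1]$ with $B,C\in\U$ rotates to the conflation $C[-1]\to A\to B$ whose outer terms lie in $\U$ (using that $\U={}^{\perp}\V$ is closed under extensions, which follows from the long exact sequence of \cite[Corollary 3.12]{NP}); by \cite[Proposition 2.8]{HZZ1}, heredity is equivalent to $\U$ being closed under cocones of deflations and also to $\V$ being closed under cones of inflations, i.e.\ to $\U[-1]\subseteq\U$ together with $\V[1]\subseteq\V$, which is condition~(1) of a co-$t$-structure. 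Assembling this dictionary with Theorem~\ref{Thm2} proves the corollary.

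The point requiring the most care is the shift-and-rotate bookkeeping: one must check that a single co-$t$-structure approximation axiom encodes \emph{both} approximation conflations of the cotorsion pair (the one in condition~(2) needs the small trick of applying the axiom to $C[-1]$ and shifting), and, on the heredity side, that the first component of a cotorsion pair is extension-closed, which is exactly what allows $\U[-1]\subseteq\U$ to be upgraded to closure under cocones of deflations. Everything else is a routine translation once the convention $\E(C,A)=\mathcal{T}(C,A[1])$ is fixed.
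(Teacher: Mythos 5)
Your proposal is correct and follows essentially the same route as the paper: the corollary is deduced directly from Theorem~\ref{Thm2} by identifying hereditary cotorsion pairs on a triangulated category with co-$t$-structures. The paper states this identification in one sentence as a known fact, whereas you supply the shift-and-rotate dictionary in detail (including the nice observation that both approximation conflations of the cotorsion pair come from the single co-$t$-structure axiom applied to $C$ and to $C[-1]$, and that heredity unpacks to $\U[-1]\subseteq\U$ and $\V[1]\subseteq\V$ via extension-closure of $\U={}^{\perp}\V$); this is a helpful elaboration of exactly what the paper asserts.
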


\begin{df}\cite[Definition 1.11]{DLLiang}
Two weak factorization systems $(\mathcal{L},\widetilde{\mathcal{R}})$ and
$(\widetilde{\mathcal{L}},\mathcal{R})$ in $\mathcal{C}$ are called {\it compatible} if the following conditions hold:

$(\mathrm{CP1})$ $\widetilde{\mathcal{L}}\subseteq\mathcal{L}$ (or equivalent, $\widetilde{\mathcal{R}}\subseteq\mathcal{R}$);

$(\mathrm{CP2})$ given composable morphisms $\alpha$ and $\beta$ in  $\mathcal{R}$, if two of three morphisms $\alpha$, $\beta$ and $\beta\alpha$ are in $\widetilde{\mathcal{R}}$, then so is the third one;

$(\mathrm{CP3})$ given $l\in\widetilde{\mathcal{L}}$ and $r\in \mathcal{R}$,
if $rl\in \widetilde{\mathcal{L}}$, then $r\in \widetilde{\mathcal{R}}.$

Let $(\mathcal{L},\widetilde{\mathcal{R}})$ and
$(\widetilde{\mathcal{L}},\mathcal{R})$ be two compatible weak factorization systems in $\mathcal{C}$.
We define
$$\W_{\widetilde{\mathcal{L}},\widetilde{\mathcal{R}}}=\{\alpha~|~\alpha~ \text{can be decomposed as }\alpha=\widetilde{r} \widetilde{l}~\text{with}~ \widetilde{l}\in\widetilde{\mathcal{L}}~\text{and}~\widetilde{r}\in \widetilde{\mathcal{R}}\}.$$
\end{df}

\begin{lem}\label{lem:co}
Let $(\mathcal{L},\widetilde{\mathcal{R}})$ and
$(\widetilde{\mathcal{L}},\mathcal{R})$ be two compatible admissible weak factorization systems in extriangulated category $(\mathcal{C}, \mathbb{E}, \mathfrak{s})$. If the composition $g=f h$ is in $\W_{\widetilde{\mathcal{L}},\widetilde{\mathcal{R}}}$ with
$f\in \widetilde{\mathcal{R}}$, then $h\in \W_{\widetilde{\mathcal{L}},\widetilde{\mathcal{R}}}.$
\end{lem}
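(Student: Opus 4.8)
The plan is to factor $h$ using the weak factorization system $(\widetilde{\mathcal{L}},\mathcal{R})$ and then show the second factor lands in $\widetilde{\mathcal{R}}$. First I would write $h = r\,\widetilde{l}$ with $\widetilde{l}\in\widetilde{\mathcal{L}}$ and $r\in\mathcal{R}$. Then $g = fh = (fr)\widetilde{l}$. Since $g\in\We$ and $\widetilde{l}\in\widetilde{\mathcal{L}}$, the natural step is to invoke (CP3): if I can arrange a decomposition exhibiting $g$ as (something in $\widetilde{\mathcal{L}}$) followed by (something in $\mathcal{R}$), then compatibility will force the $\mathcal{R}$-part into $\widetilde{\mathcal{R}}$. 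More precisely, $g\in\We$ means $g = \widetilde{r}_0\widetilde{l}_0$ with $\widetilde{l}_0\in\widetilde{\mathcal{L}}$, $\widetilde{r}_0\in\widetilde{\mathcal{R}}\subseteq\mathcal{R}$; combined with $g = (fr)\widetilde{l}$, a lifting-property argument in the commutative square with $\widetilde{l}$ on the left (using $\widetilde{l}\,\Box\,\mathcal{R}$, noting $fr\in\mathcal{R}$ since $\mathcal{R}$ is closed under composition by Remark~\ref{rm1}) should let me compare the two factorizations and conclude $fr\in\widetilde{\mathcal{R}}$ via (CP3) applied to $g=\widetilde{r}_0\widetilde{l}_0$ with $\widetilde{l}_0\in\widetilde{\mathcal L}$ and the composite $fr\in\mathcal R$.

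Once $fr\in\widetilde{\mathcal{R}}$, I would use (CP2): the morphisms $f$, $r$ (after checking $f,fr\in\mathcal{R}$, which holds since $\widetilde{\mathcal{R}}\subseteq\mathcal{R}$ and $fr\in\widetilde{\mathcal{R}}\subseteq\mathcal{R}$, and $r\in\mathcal{R}$ by construction) are composable morphisms in $\mathcal{R}$ with $f\in\widetilde{\mathcal{R}}$ and $fr\in\widetilde{\mathcal{R}}$ two of the three in $\widetilde{\mathcal{R}}$; hence the third, $r$, lies in $\widetilde{\mathcal{R}}$. Then $h = r\,\widetilde{l}$ with $\widetilde{l}\in\widetilde{\mathcal{L}}$ and $r\in\widetilde{\mathcal{R}}$, which is exactly the statement $h\in\We$.

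The main obstacle I anticipate is the comparison step that yields $fr\in\widetilde{\mathcal{R}}$: I have two factorizations of $g$, one as $\widetilde{r}_0\widetilde{l}_0$ (with both factors in the ``tilde'' classes) and one as $(fr)\widetilde{l}$ (with $\widetilde{l}\in\widetilde{\mathcal{L}}$ but $fr$ only known to be in $\mathcal{R}$). I must produce a lift in the square whose left edge is $\widetilde{l}\in\widetilde{\mathcal{L}}={}^{\Box}\mathcal{R}$ and whose right edge is $\widetilde{r}_0\in\widetilde{\mathcal{R}}\subseteq\mathcal{R}$, obtaining a retract-type diagram that realizes $fr$ as a retract of $\widetilde{r}_0\in\widetilde{\mathcal{R}}$ — but since $\widetilde{\mathcal R}$ need not be closed under retracts inside $\mathcal R$ (only $(\widetilde{\mathcal L},\mathcal R)$-retracts are automatic), I instead push the argument through (CP3) directly: the retract diagram exhibits $fr$ as $r'$ for a factorization $g = \widetilde{r}_0' \widetilde{l}$ with the $\mathcal R$-part being (a conjugate of) $fr$, and (CP3) with $l=\widetilde{l}\in\widetilde{\mathcal L}$, $r=fr\in\mathcal R$, $rl = g\in\We\subseteq\widetilde{\mathcal L}$...

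— here I realize $\We$ is not obviously contained in $\widetilde{\mathcal L}$, so the cleanest route is: apply (CP3) in the form where the ``$\widetilde{\mathcal L}$'' hypothesis on $rl$ is replaced by membership in $\We$, which is what the compatibility conditions are designed to handle in \cite{DLLiang}; concretely, decompose $g\in\We$ and use the lifting property to split off $\widetilde{l}$, reducing to showing the remaining deflation-type map is in $\widetilde{\mathcal R}$, and then (CP2) finishes as above. I expect that verifying this reduction — getting from ``$g\in\We$ and $g$ factors through $\widetilde{l}\in\widetilde{\mathcal L}$'' to ``the cofactor is in $\widetilde{\mathcal R}$'' — is the technical heart, and it is exactly parallel to the corresponding lemma in \cite{DLLiang}; the extriangulated setting enters only through Remark~\ref{rm1} (closure of $\mathcal R$ under composition and retracts) and Theorem~\ref{Thm1}, so no new categorical input beyond lifting-property manipulations should be needed.
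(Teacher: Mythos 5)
Your overall plan -- factor $h$ via a weak factorization system and then force the $\mathcal{R}$-part into $\widetilde{\mathcal{R}}$ using (CP2) -- is conceptually reasonable, and your closing step via (CP2) is exactly how the paper finishes. But there is a genuine gap at the point you yourself flag, and the patch you sketch does not close it.

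The first problem, which you notice, is that (CP3) requires $rl\in\widetilde{\mathcal{L}}$, whereas in your setup $rl=g$ is only known to lie in $\We$; since $\We$ need not be contained in $\widetilde{\mathcal{L}}$, (CP3) does not apply. Your attempted retract-type repair also fails: comparing the two factorizations $g=(fr)\widetilde{l}=\widetilde{r}_0\widetilde{l}_0$ via lifting squares produces maps $t,s$ between the middle objects, but for a \emph{weak} factorization system there is no uniqueness of lifts, so one cannot conclude $st=1$; consequently $fr$ is not exhibited as a retract of $\widetilde{r}_0$, and the whole ``conjugate factorization'' reduction remains unproved. (Incidentally, your worry that $\widetilde{\mathcal{R}}$ might fail to be closed under retracts is misplaced -- Remark~\ref{rm1} gives closure under all retracts -- but that is not the issue; the issue is that no retract diagram actually arises.) Your final assertion that ``no new categorical input beyond lifting-property manipulations should be needed'' is where the plan goes wrong: the paper's proof genuinely needs a weak pullback, which is the extriangulated-specific ingredient.

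The paper's proof proceeds quite differently. It first factors $h=h_2h_1$ with $h_1\in\mathcal{L}$ and $h_2\in\widetilde{\mathcal{R}}$ (using $(\mathcal{L},\widetilde{\mathcal{R}})$, not $(\widetilde{\mathcal{L}},\mathcal{R})$ as you do), and reduces -- via the identity $\mathcal{L}\cap\We=\widetilde{\mathcal{L}}$ from \cite[Lemma 1.12]{DLLiang} -- to the special case $h\in\mathcal{L}$. It then takes the $\We$-decomposition $g=\widetilde{r}\widetilde{l}$, observes that $f$ and $\widetilde{r}$ are both deflations, and constructs the weak pullback $P$ of $f$ and $\widetilde{r}$ via the dual of \cite[Lemma 3.13]{NP}. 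The weak pullback property produces a comparison morphism $\alpha:X\to P$ with $\widetilde{f}_1\alpha=\widetilde{l}$ and $\widetilde{f}_2\alpha=h$. Factoring $\alpha=f'\widetilde{l}'$ with $\widetilde{l}'\in\widetilde{\mathcal{L}}$ and $f'\in\mathcal{R}$, the paper then applies (CP3) to the genuine identity $\widetilde{l}=(\widetilde{f}_1f')\widetilde{l}'\in\widetilde{\mathcal{L}}$ (not merely in $\We$), getting $\widetilde{f}_1f'\in\widetilde{\mathcal{R}}$, and (CP2) then gives $f'\in\widetilde{\mathcal{R}}$, whence $h=\widetilde{f}_2f'\widetilde{l}'\in\We$. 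In short, the weak pullback is what turns the two ambient factorizations into a single morphism $\alpha$ to which (CP3) can be applied with an honest $\widetilde{\mathcal{L}}$-hypothesis; that step has no pure lifting-property substitute, and its omission is the real gap in your proposal.
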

\begin{proof}
Since $(\mathcal{L},\widetilde{\mathcal{R}})$ is an admissible weak factorization system, we have $h=h_{2} h_{1}$ with $h_{1}\in \mathcal{L}$ and $h_{2}\in \widetilde{\mathcal{R}}.$
Then $g=fh_{2}h_{1}$ with $fh_{2}\in \widetilde{\mathcal{R}}$ by Remark \ref{rm1}.
If the conclusion holds for $h_1$, that is, $h_1\in \W_{\widetilde{\mathcal{L}},\widetilde{\mathcal{R}}}$, then $h_1\in \mathcal{L}\bigcap \W_{\widetilde{\mathcal{L}},\widetilde{\mathcal{R}}}$. It follows from \cite[Lemma 1.12]{DLLiang} that $h_1\in\widetilde{\mathcal{L}}$, so $h=h_2h_1\in \W_{\widetilde{\mathcal{L}},\widetilde{\mathcal{R}}}$.  So we only deal with the special case that $h\in \mathcal{L}$.
Let $h:X\ra Z$ and $f:Z\ra Y$.
Since $g=f h\in \We$, by definition, there is a morphism $\widetilde{l}:X\ra Z'$ in $\widetilde{\mathcal{L}}$ and a morphism $\widetilde{r}:Z'\ra Y$ in $\widetilde{\mathcal{R}}$ such that the following diagram commutes:
$$\xymatrix{X\ar[r]^{h} \ar[d]_{\widetilde{l}} &Z\ar[d]^{f}\\
Z'\ar[r]^{\widetilde{r}} &Y
}$$
 Since $(\mathcal{L},\widetilde{\mathcal{R}})$
 is an admissible weak factorization system,  $f$ and $\widetilde{r}$ are deflations by definition. Thus we have the following commutative diagram:
$$\scalebox{0.92}[1.0]{     \xymatrix{&\YZ(f)\ar[d]^{}\ar@{=}[r]&\YZ(f)\ar[d]_{}\\
\YZ(\widetilde{r})\ar[r]^{}\ar@{=}[d]&P\ar[r]^{\widetilde{f_{2}}}\ar[d]_{\widetilde{f_{1}}}&Z\ar[d]^{f}&\\
\YZ(\widetilde{r})\ar[r]^{}&Z'\ar[r]^{\widetilde{r}}&Y}}$$
Then one has that $\widetilde{f_{1}}$ and $\widetilde{f_{2}}$ are in $\widetilde{\mathcal{R}}$.
Note that
$$\xymatrix{P\ar[r]^{\widetilde{f_{2}}} \ar[d]_{\widetilde{f_{1}}} &Z\ar[d]^{f}\\
Z'\ar[r]^{\widetilde{r}} &Y
}$$
is a weak pullback by the dual of \cite[Lemma 3.13]{NP}.
It follows that there is a morphism $\alpha:X\ra P$ such that $\widetilde{f_{1}}\alpha=\widetilde{l}$ and $\widetilde{f_{2}}\alpha=h$.
Since $(\widetilde{\mathcal{L}},\mathcal{R})$ is an admissible weak factorization system, there is a morphism $\widetilde{l'}:X\ra P'$ in $\widetilde{\mathcal{L}}$ and a morphism $f':P'\ra P$ in $\mathcal{R}$
\begin{align*}
\xymatrix{X\ar[dr]_{\widetilde{l'}}\ar[rr]^{\alpha}& &P\\
&P'\ar[ur]_{f'}}
\end{align*}
with $\alpha=f' \widetilde{l'}.$
Since $\widetilde{l}=\widetilde{f_{1}}\alpha=\widetilde{f_{1}}f'\widetilde{l'}$ in $\widetilde{\mathcal{L}},$ we have that $\widetilde{f_{1}}f'$ belongs to $\widetilde{\mathcal{R}}$ by condition (CP3).
It follows from condition $(\mathrm{CP2})$ that $f'\in \widetilde{\mathcal{R}}.$
Consequently, $h=\widetilde{f_{2}}f'\widetilde{l'}$ is in $\W_{\widetilde{\mathcal{L}},\widetilde{\mathcal{R}}}$.
\end{proof}

\begin{df}
Two cotorsion pairs $(\U,\widetilde{\V})$ and $(\widetilde{\U},\V)$ are {\it compatible} if $\widetilde{\U}\subseteq\U$ (or equivalently $\widetilde{\V}\subseteq\V)$ and $\U\bigcap\widetilde{\V}=\widetilde{\U}\bigcap\V$.
\end{df}

\begin{thm}\label{Thm3}
Let $(\mathcal{C}, \mathbb{E}, \mathfrak{s})$ be an extriangulated category, and let $(\U,\widetilde{\V})$ and $(\widetilde{\U},\V)$ be two pairs of classes of objects in $\C.$
Then the following are equivalent:

{\rm (1)} $(\U,\widetilde{\V})$ and $(\widetilde{\U},\V)$ are compatible cotorsion pairs on $\C$.

{\rm (2)} $(\I~\U,\D~\widetilde{\V})$ and $(\I~ \widetilde{\U},\D~\V)$ are compatible  admissible weak factorization systems.
\end{thm}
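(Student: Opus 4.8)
The plan is to prove the equivalence by combining the bijection of Theorem \ref{Thm1} with a careful dictionary between the compatibility conditions (CP1)--(CP3) for weak factorization systems and the two compatibility conditions $\widetilde\U\subseteq\U$ and $\U\cap\widetilde\V=\widetilde\U\cap\V$ for cotorsion pairs. First, by Theorem \ref{Thm1}, each of $(\U,\widetilde\V)$, $(\widetilde\U,\V)$ is a cotorsion pair if and only if $(\I~\U,\D~\widetilde\V)$ and $(\I~\widetilde\U,\D~\V)$ are admissible weak factorization systems; so under either hypothesis we already have two admissible weak factorization systems in hand, and the remaining task is to match ``compatible cotorsion pairs'' with ``compatible admissible weak factorization systems.''

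The direction $(1)\Rightarrow(2)$: assume $(\U,\widetilde\V)$ and $(\widetilde\U,\V)$ are compatible cotorsion pairs. For (CP1), note $\widetilde\U\subseteq\U$ immediately gives $\I~\widetilde\U\subseteq\I~\U$ (an inflation with cone in $\widetilde\U$ has cone in $\U$). For (CP2) and (CP3) I would work directly with $\E$-triangles: a composition $\beta\alpha$ of deflations in $\D~\V$ sits in an $\E$-triangle relating $\YZ(\alpha)$, $\YZ(\beta\alpha)$, $\YZ(\beta)$ via (ET4) (cf.\ \cite[Lemma 2.11]{HZZ1}), and membership in $\D~\widetilde\V$ is detected by the relevant cocone lying in $\widetilde\V$; the 2-out-of-3 statement (CP2) then follows from the fact that $\widetilde\V$ is closed under extensions (it is the right half of a cotorsion pair) together with $\widetilde\V$ being closed under cocones of deflations when needed — here the hypothesis $\U\cap\widetilde\V=\widetilde\U\cap\V$ enters to identify the relevant intersection classes. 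For (CP3), given $l\in\I~\widetilde\U$ and $r\in\D~\V$ with $rl\in\I~\widetilde\U$, I would use Condition (WIC) to see $r$ is a deflation, build the appropriate $\E$-triangle via \cite[Lemma 3.14]{NP}, and deduce $\YZ(r)\in\U\cap\widetilde\V=\widetilde\U\cap\V$, whence $\YZ(r)\in\widetilde\V$ so $r\in\D~\widetilde\V$.

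The direction $(2)\Rightarrow(1)$: from Theorem \ref{Thm1} both pairs are cotorsion pairs, so I only need $\widetilde\U\subseteq\U$ and $\U\cap\widetilde\V=\widetilde\U\cap\V$. The inclusion $\widetilde\U\subseteq\U$ follows from (CP1) by applying $\Zh(-)$: if $W\in\widetilde\U$ then $0\to W$ lies in $\I~\widetilde\U\subseteq\I~\U$, so $W=\Zh(0\to W)\in\U$. For the intersection identity, I would test objects: given $X\in\U\cap\widetilde\V$, the morphism $X\to 0$ lies in $\D~\widetilde\V$, and since $X\in\U$ one can realize $X$ as $\Zh$ of an $\I~\U$-morphism and combine this with (CP3) (or with the characterization of $\We$ and \cite[Lemma 1.12]{DLLiang}) to push $X\to 0$ into $\D~\widetilde\V\cap(\text{something in }\I~\widetilde\U)$, forcing $X\in\widetilde\U$; the reverse containment $\widetilde\U\cap\V\subseteq\U\cap\widetilde\V$ is handled dually (or is immediate from $\widetilde\U\subseteq\U$ and $\V\subseteq\widetilde\V$, the latter being the equivalent form of (CP1)).

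The main obstacle I anticipate is the careful bookkeeping in $(1)\Rightarrow(2)$ for (CP2) and (CP3): one must translate the purely categorical 2-out-of-3 and cancellation statements about deflations into statements about the cocone objects, and this requires invoking the (ET4)/(ET4)$^{\mathrm{op}}$-type octahedral diagrams (via \cite[Lemma 2.11]{HZZ1}, \cite[Lemma 3.14]{NP}, \cite[Proposition 3.15]{NP}) to produce the right $\E$-triangles among the $\YZ$'s and $\Zh$'s, and then to use exactly closure under extensions together with the hypothesis $\U\cap\widetilde\V=\widetilde\U\cap\V$ at the one spot where a direct closure property is not available. Everything else is formal manipulation of lifting properties and direct-summand closure, already licensed by Remark \ref{rm1}, Theorem \ref{Thm1}, and Lemma \ref{lem:co}.
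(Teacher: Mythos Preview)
Your overall architecture and the direction $(2)\Rightarrow(1)$ are essentially the paper's; the use of (CP3) for $\widetilde\U\cap\V\subseteq\U\cap\widetilde\V$ and of Lemma~\ref{lem:co} together with \cite[Lemma 1.12]{DLLiang} for the reverse inclusion is exactly what the paper does. (A small slip: your parenthetical ``$\V\subseteq\widetilde\V$'' is backwards; one has $\widetilde\V\subseteq\V$, so the ``immediate'' route does not work and you really do need the dual (CP3)/$\We$ argument.)

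There is, however, a genuine gap in $(1)\Rightarrow(2)$, in your treatment of (CP2) and (CP3). For (CP2) you produce, via \cite[Lemma 2.11]{HZZ1}, the conflation $\YZ(\alpha)\to\YZ(\beta\alpha)\to\YZ(\beta)$ with all three terms in $\V$. Extension closure of $\widetilde\V$ only settles the case where the two outer terms lie in $\widetilde\V$; for the other two cases you need $\widetilde\V$ to be closed under cocones of deflations, respectively cones of inflations, and this is \emph{not} part of the hypotheses (no heredity is assumed). Your hint that ``$\U\cap\widetilde\V=\widetilde\U\cap\V$ enters'' is not turned into an argument. For (CP3), from the conflation $\YZ(r)\to\Zh(l)\to\Zh(rl)$ with $\Zh(l),\Zh(rl)\in\widetilde\U$ you assert $\YZ(r)\in\U\cap\widetilde\V$; but $\YZ(r)\in\widetilde\V$ is precisely the conclusion sought, and $\YZ(r)\in\U$ is not established either (the long exact sequence gives no control on $\E(\YZ(r),-)$ from vanishing of $\E(\Zh(l),-)$ and $\E(\Zh(rl),-)$).

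The paper closes both gaps with one external input you are missing: by \cite[Theorem 3.9]{Z}, the compatibility hypothesis yields a \emph{thick} class $\W$ with $\widetilde\U=\U\cap\W$ and $\widetilde\V=\V\cap\W$. Thickness of $\W$ then gives the full $2$-out-of-$3$ for the object-level conflations above, which immediately dispatches (CP2) and, in (CP3), forces $\YZ(r)\in\W\cap\V=\widetilde\V$. Without invoking (or reproving) this thick-class result, your direct approach does not go through.
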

\begin{proof}
$(1)\Rightarrow (2)$ By Theorem \ref{Thm1}, $(\I~\U,\D~\widetilde{\V})$ and $(\I~\widetilde{\U},\D~\V)$ are two admissible weak factorization systems.
Now we show that they are compatible. By \cite[Theorem  3.9]{Z}
there is a thick class $\W$ of $\C$ such that $\widetilde{\U}=\U\bigcap\W$ and $\widetilde{\V}=\V\bigcap\W$.
The (CP1) holds clearly.
Suppose that $f:X\ra Y$ and $g:Y\ra Z$ are in $\D~(\V).$
Then $g f$ is a deflation and then there exists a conflation
$$\YZ(f)\ra \YZ(gf)\ra \YZ(g)$$
in $\C$ by \cite[Lemma 2.11]{HZZ1}.
Thus the condition (CP2) holds as $\W$ is a thick class.

To prove (CP3), let $h:A\ra B$ be in $\I~\widetilde{\U}$ and $f:B\ra C$ be in $\D~\V$ such that $g=f h$ is in $\I~\widetilde{\U}.$
Then $h$ and $g$ are inflations with $\Zh(h)$ and $\Zh(g)$ in $\widetilde{\U}$.
Thus by \cite[Lemma 2.12]{HZZ1}, there is a conflation
$$\YZ(f)\ra \Zh(h)\ra \Zh(g).$$
 Since $\W$ is a thick class, it follows that $\YZ(f)$ belongs to $\W$.
Thus $\YZ(f)$ is in $\widetilde{\V}.$

$(2)\Rightarrow(1)$ By Theorem \ref{Thm1}, $(\U,\widetilde{\V})$ and $(\widetilde{\U},\V)$ are two cotorsion pairs in $\C.$
If $X\in \widetilde{\U}$, then $0\ra X$ is in $\I~\widetilde{\U}$
and hence in $\I~\U$, so $X\in \U$ which yields that $\widetilde{\U}\subseteq\U.$
 Next we claim that $\widetilde{\U}\bigcap\V\subseteq\U\bigcap \widetilde{\V}.$
Take $M\in \widetilde{\U}\bigcap\V$.
Then $0\ra M$ is in $\I~\widetilde{\U}$ and $M\ra 0$ is in $\D~\V.$ Since $(\I~\U,\D~\widetilde{\V})$ and $(\I~\widetilde{\U},\D~\V)$ are compatible weak factorization systems, it follows  from the Condition (CP3) that $M\ra 0$ is in $\D~\widetilde{\V}$ as $0\ra 0$ is in $\I~\widetilde{\U}.$
Hence $M\in \widetilde{\V},$ so $M\in \U\bigcap\widetilde{\V}.$

We put $\widetilde{\mathcal{L}}=\I~\widetilde{\U}$ and $\widetilde{\mathcal{R}}=\D~~\widetilde{\V}$.
To prove $\U\bigcap\widetilde{\V}\subseteq\widetilde{\U}\bigcap\V,$
let $N$ be an object in $\U\bigcap\widetilde{\V}.$
Then $0\ra N$ is in $\I~\U$ and $N\ra 0$ is in $\D~\widetilde{\V}$.
Since $0\ra 0$ is in $\We$, it follows from  Lemma \ref{lem:co} that $0\ra N$ is in $\We \bigcap\I~\U$. Thus, by \cite[Lemma 1.12]{DLLiang}, $0\ra N$ is in $\I~\widetilde{\U}$.
So $N\in \widetilde{\U}.$
On the other hand,  $N\in \widetilde{\V}=\U^{\perp}\subseteq\widetilde{\U}^{\perp}=\V.$
Therefore $\U\bigcap\widetilde{\V}\subseteq\widetilde{\U}\bigcap \V.$
We have proven that $\U\bigcap\widetilde{\V}=\widetilde{\U}\bigcap \V$, so $(\widetilde{\U},\V)$ and $(\U,\widetilde{\V})$ are compatible cotorsion pairs.
\end{proof}

\begin{cor}\cite[Theorem B]{DLLiang}
Let $\mathcal{A}$ be an abelian category, and let $(\U,\widetilde{\V})$ and $(\widetilde{\U},\V)$ be two pairs of classes of objects in $\mathcal{A}.$
Then the following are equivalent:

{\rm (1)} $(\U,\widetilde{\V})$ and $(\widetilde{\U},\V)$ are compatible cotorsion pairs.

{\rm (2)} $(\I~\U,\D~\widetilde{\V})$ and $(\I~ \widetilde{\U},\D~\V)$ are compatible weak factorization systems.
\end{cor}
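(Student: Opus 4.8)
The plan is to obtain this as a direct specialization of Theorem~\ref{Thm3} to the case $\C=\mathcal{A}$, so that the work is entirely in setting up the dictionary between the exact-categorical and the extriangulated formalisms. First I would record that an abelian category $\mathcal{A}$, regarded as an exact category in which every short exact sequence is admissible, is an extriangulated category by \cite[Example 2.13]{NP}, and that it is idempotent complete, hence weakly idempotent complete, so that Condition~(WIC) holds and the standing assumption of the paper is in force.

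Next I would translate the notation appearing in Theorem~\ref{Thm3}. In this exact structure the inflations are exactly the monomorphisms and the deflations exactly the epimorphisms; moreover $\Zh(f)=\coker(f)$ for a monomorphism $f$, and $\YZ(g)=\ker(g)$ for an epimorphism $g$. Hence for any class of objects one has $\I~\U=\mathrm{Mon}(\U)$ and $\D~\V=\mathrm{Epi}(\V)$, so the two pairs in statement (2) are precisely $(\mathrm{Mon}(\U),\mathrm{Epi}(\widetilde{\V}))$ and $(\mathrm{Mon}(\widetilde{\U}),\mathrm{Epi}(\V))$, which is the formulation of \cite[Theorem B]{DLLiang}.

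The one point that genuinely needs checking --- and which I expect to be the only mild subtlety --- is that for pairs of this particular form the notions of (compatible) weak factorization system and (compatible) \emph{admissible} weak factorization system coincide. I would verify that $f\in\mathrm{Mon}(\U)$ if and only if $f$ is an inflation with $0\ra\Zh(f)\in\mathrm{Mon}(\U)$, which is immediate since the cokernel of $0\ra\coker(f)$ is $\coker(f)$ again, and dually for $\mathrm{Epi}(\V)$; thus every weak factorization system of this shape is automatically admissible in the sense of the paper. Because the compatibility conditions $(\mathrm{CP1})$--$(\mathrm{CP3})$ make no reference to admissibility, compatibility of these weak factorization systems is the same whether they are regarded as weak factorization systems or as admissible ones. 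With this dictionary in place, the equivalence (1)$\Leftrightarrow$(2) is exactly Theorem~\ref{Thm3} read in $\C=\mathcal{A}$, and no further argument is required.
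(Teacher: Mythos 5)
Your proposal is correct and follows precisely the route the paper intends: the corollary is an immediate specialization of Theorem~\ref{Thm3} to $\C=\mathcal{A}$. You have supplied the necessary dictionary (an abelian category is a weakly idempotent complete extriangulated category, inflations and deflations are the monomorphisms and epimorphisms, $\Zh=\coker$ and $\YZ=\ker$, so $\I~\U=\mathrm{Mon}(\U)$ and $\D~\V=\mathrm{Epi}(\V)$) together with the small but genuine observation that any weak factorization system of the form $(\I~\U,\D~\V)$ is automatically admissible, which is exactly what is needed to reconcile the wording of the corollary with that of Theorem~\ref{Thm3}.
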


%\begin{definition} \cite{Bondarko,MSSS2}  {\rm Let $\mathcal{T}$ be a triangulated category. A \emph{co-t-structure} on $\mathcal{T}$ is a pair $(\mathcal{A},\mathcal{B})$ of subcategories of $\mathcal{T}$ such that
%\begin{enumerate}
%\item $\mathcal{A}[-1]\subseteq \mathcal{A}$ and $\mathcal{B}[1]\subseteq \mathcal{B}$.
%
%\item $\mathrm{Hom}_{\mathcal{T}}(\mathcal{A}[-1],\mathcal{B})=0$.
%
%\item Any object $T\in{\mathcal{T}}$ has a distinguished triangle
%$$\xymatrix@C=0.6cm{X\ar[r]&T\ar[r]&Y\ar[r]&X[1],}$$
%in $\mathcal{T}$ with $X\in{\mathcal{A}[-1]}$ and $Y\in{\mathcal{B}}$.
%\end{enumerate}}
%\end{definition}
%

\begin{cor}
Let $\mathcal{T}$ be a triangulated category, and let $(\U,\widetilde{\V})$ and $(\widetilde{\U},\V)$ be two pairs of classes of objects in $\mathcal{T}.$
Then the following are equivalent:

{\rm (1)} $(\U,\widetilde{\V})$ and $(\widetilde{\U},\V)$ are compatible cotorsion pairs on $\mathcal{T}$.

{\rm (2)} $(\I~\U,\D~\widetilde{\V})$ and $(\I~ \widetilde{\U},\D~\V)$ are compatible weak factorization systems.
\end{cor}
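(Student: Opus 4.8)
The plan is to deduce this directly from Theorem~\ref{Thm3}, after observing that in the triangulated setting the word ``admissible'' is redundant for weak factorization systems of the shape $(\I~\mathcal{X},\D~\mathcal{Y})$.

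First I would record that a triangulated category $\mathcal{T}$ is in particular an extriangulated category (an $\E$-extension being realized by the triangle it determines), and that Condition~(WIC) holds automatically in any triangulated category, so $\mathcal{T}$ is weakly idempotent complete and Theorem~\ref{Thm3} applies verbatim. This already identifies statement~(1) with the assertion that $(\I~\U,\D~\widetilde{\V})$ and $(\I~\widetilde{\U},\D~\V)$ are \emph{compatible admissible} weak factorization systems. Hence it remains only to see that, for any pair $(\mathcal{X},\mathcal{Y})$ of classes of objects in $\mathcal{T}$, the pair $(\I~\mathcal{X},\D~\mathcal{Y})$ is an admissible weak factorization system as soon as it is a weak factorization system (the reverse implication being immediate from the definition), and that the notion of compatibility for two such systems (conditions $(\mathrm{CP1})$--$(\mathrm{CP3})$) refers only to the classes of morphisms and so is insensitive to the word ``admissible''.

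For the remaining point, what has to be checked are conditions~(1) and~(2) in the definition of an admissible weak factorization system. For condition~(1): if $f\in\I~\mathcal{X}$, then $f$ is an inflation with $\Zh(f)\in\mathcal{X}$, and since $\Zh(0\ra\Zh(f))\cong\Zh(f)$ the split inflation $0\ra\Zh(f)$ again lies in $\I~\mathcal{X}$; conversely, if $f$ is an inflation with $0\ra\Zh(f)\in\I~\mathcal{X}$, then $\Zh(f)\cong\Zh(0\ra\Zh(f))\in\mathcal{X}$, whence $f\in\I~\mathcal{X}$. Condition~(2) is obtained by the dual argument, using $\YZ(\YZ(g)\ra 0)\cong\YZ(g)$. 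Feeding $(\mathcal{X},\mathcal{Y})=(\U,\widetilde{\V})$ and $(\mathcal{X},\mathcal{Y})=(\widetilde{\U},\V)$ into this observation and combining with Theorem~\ref{Thm3} gives the stated equivalence. The only step demanding any care is this last verification — essentially the bookkeeping around the elementary identification $\Zh(0\ra C)\cong C$ in a triangulated category — while everything else is a direct application of Theorem~\ref{Thm3}.
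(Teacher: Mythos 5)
Your proof is correct and is the expected deduction from Theorem~\ref{Thm3}, which the paper states without explicit proof. The key observation — that any weak factorization system of the form $(\I~\mathcal{X},\D~\mathcal{Y})$ is automatically admissible, since $0\to C$ is a split inflation with $\Zh(0\to C)\cong C$ and dually $\YZ(C\to 0)\cong C$ — is precisely the point the paper waves at with ``it is easy to check'' in the proof of Theorem~\ref{Thm1}, and it holds in any extriangulated category, not only a triangulated one; together with the fact that triangulated categories are extriangulated and satisfy Condition (WIC), this gives exactly the corollary.
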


\renewcommand\refname{\bf References}

\vspace{4mm}
\textbf{Yajun Ma}\\
School of Mathematics and Physics, Lanzhou Jiaotong University, Lanzhou 730070, P. R. China;\\
E-mail: \textsf{13919042158@163.com}\\[1mm]
\textbf{Hanyang You}\\
School of Mathematics, Hangzhou Normal University,
 Hangzhou 311121, P. R. China;\\
E-mail: youhanyang910328@163.com\\[1mm]
\textbf{Dongdong Zhang}\\
Department of Mathematics, Zhejiang Normal University,
Jihhua 321004, P. R. China;\\
E-mail: \textsf{zdd@zjnu.cn}\\[1mm]
\textbf{Panyue Zhou}\\
School of Mathematics and Statistics, Changsha University of Science and Technology,
Changsha 410114, P. R. China.\\
E-mail: \textsf{panyuezhou@163.com}\\[1mm]
\end{document}